\newtheoremstyle{mythm}{1.5ex plus 1ex minus .2ex}{1.5ex plus 1ex
minus .2ex}{\kai}{\parindent}{\song\bfseries}{}{1em}{}
\numberwithin{equation}{section}
\newtheorem{theorem}{Theorem}[section]
\newtheorem{lemma}{Lemma}[section]
\newtheorem{remark}{Remark} [section]
\begin{document}
\title{{\textbf{Analyticity of the solutions to degenerate Monge-Amp{\`e}re equations}}}
\author{Genggeng Huang\footnote{Email address: genggenghuang@fudan.edu.cn.} and Yingshu L\"{u}\footnote{Corresponding author. Email address: yingshulv@fudan.edu.cn. }}
\date{}
\maketitle
\begin{center}
School of Mathematical Sciences, Fudan University, Shanghai, China
\end{center}
\date{}
\maketitle
\begin{abstract}
This paper is devoted to study the following degenerate Monge-Amp{\`e}re equation:
\begin{eqnarray}\label{ab1}
\begin{cases}
\det D^2 u=\Lambda_q  (-u)^q \quad \text{in}\quad \Omega,\\
u=0 \quad\text{on}\quad \partial\Omega
\end{cases}
\end{eqnarray}
for some positive constant $\Lambda_q$.
Suppose $\Omega\subset\subset \mathbb R^n$ is  uniformly convex and analytic. Then the solution of \eqref{ab1} is analytic in $\bar\Omega$ provided $q\in \mathbb Z^+$.
\end{abstract}

\textbf{Keywords}: Analyticity; degenerate elliptic; Monge-Amp{\`e}re equations

\textbf{Mathematics Subject Classification}: 35A20, 35J70, 35J96

\section{Introduction}
In this paper, we focus on the analyticity of the solution of the following Monge-Amp{\`e}re equation:
\begin{eqnarray}\label{intro1}
\begin{cases}
\det D^2 u=\Lambda_q (-u)^q \quad \text{in}\quad \Omega,\\
u=0 \quad \text{on} \quad\partial \Omega,
\end{cases}
\end{eqnarray}
where $q>0$ and $\Omega$ is a bounded convex domain in $\mathbb R^n$.

This problem was first studied by Lions\cite{Lions1985}. In \cite{Lions1985}, Lions proved that for $q=n$, \eqref{intro1} admits a unique eigenvalue $\Lambda_n$ and eigenfunction $u\in C^{1,1}(\bar\Omega)\cap C^{\infty}(\Omega)$(up to multiplications of positive constants) provided $\Omega$ is smooth and uniformly convex.
\par Later, Chou \cite{Chou1990} studied the problem \eqref{intro1} for $q>0$. His approach is based on the Monge-Amp{\`e}re functional
\begin{eqnarray}\label{intro2}
J(u)=\frac{1}{n+1}\int_{\Omega}(-u)\det D^2 udx-\frac 1{q+1}\int_{\Omega}|u|^{q+1}dx
\end{eqnarray}
and the following logarithmic gradient flow
\begin{equation}
\left\{\begin{array}{ll}
\frac{\partial u}{\partial t}=\ln \det(D_x^2 u)-q\ln (-u),\quad (x,t)\in \Omega\times (0,+\infty),\\
u(x,0)=u_0(x),\quad x\in \Omega,\\
u=0 \quad \text{on}\quad \partial\Omega\times (0,+\infty).
\end{array}
\right.
\end{equation}

Then Chou obtained the existence of non-trivial solutions $u\in C^{0,1}(\bar\Omega)\cap C^\infty(\Omega)$ provided $\Omega$ is smooth and uniformly convex. Also the uniqueness of non-trivial solution  was established in \cite{Chou1990} for $0<q<n$ and $q=n$(up to multiplications of positive constants). Later, Hartenstine \cite{Hartenstine2009} extended it to bounded and strictly convex domain $\Omega$ for $0<q<n$. Recently, Le \cite{Le2017} proved the existence($q>0$) of solutions of \eqref{intro1} and the uniqueness($0<q\le n$) of solutions of \eqref{intro1}  for general bounded convex domain $\Omega$. For $q>n$, the uniqueness of non-trivial solutions of \eqref{intro1} is less known. Recently, the first named author \cite{Huang2019} proved that there exists a constant $\varepsilon(n)>0$ such that \eqref{intro1} admits a unique non-trivial solution for $q\in(n,n+\varepsilon(n))$.
\par The higher order global regularity of the solutions of \eqref{intro1} remains unknown until rencent 10 years. The main difficulty arises from the degeneracy of the equation \eqref{intro1} on the boundary $\partial\Omega$. In a survey paper, Trudinger and Wang [\cite{TrudingerWang2008},P21] proposed the problem that whether $u$ is smooth up to the boundary for $q=n$ when $\Omega$ is smooth and uniformly convex. Later, Hong, Huang and Wang \cite{HongHuangWang2011} gave an affirmative answer to this problem in dimension 2. Their approach relies on an auxiliary function
\begin{equation}
H=u_{22}u_{1}^2-2u_{12}u_1u_2+u_{11}u_2^2
\end{equation}
which is related to the Gauss curvature of the level set of $u$. For arbitrary dimensions, Savin \cite{Savin2014}
made a first contribution on the global $C^2$ regularity of solutions of \eqref{intro1}. Later, Le and Savin\cite{LeSavin2017} completely solved the above problem in arbitrary dimensions. The key observation of the work \cite{Savin2014} and \cite{LeSavin2017} is that near the boundary,
\begin{equation}
u(x)\sim \frac{1}{2}|x'|^2+\frac {1}{(q+1)(q+2)}x_n^{q+2}
\end{equation}
which allows them to use blow-up and perturbation arguments to show $u\in C^{2,\alpha}$.
Then they can raise the regularity up to $C^\infty$ by investigating a linear degenerate elliptic equation.
\par It is natural to ask that whether the solution of \eqref{intro1} is analytic up to the boundary provided the domain $\Omega$ is analytic and uniformly convex. The analyticity of the solutions of uniformly elliptic equations(systems) are well studied. We refer readers to \cite{MorreyNirenberg1957} for linear equations(systems), \cite{Morrey1958a,Morrey1958b} for non-linear equations(systems) and \cite{Friedman1958} for more general regularity results. There seems no unified results for degenerate elliptic cases. In the present case, the model degenerate equation is
\begin{eqnarray}\label{intro3}
\begin{cases}
\mathcal L(u)=u_{nn}+x_n^m\Delta_{x'} u=f \quad \text{in}\quad \mathbb R^n_+,\\
u(x',0)=g(x'),\quad x'\in \mathbb R^{n-1},
\end{cases}
\end{eqnarray}
where $x=(x_1,\cdots,x_{n-1},x_n)=(x',x_n)$. Usually, people call \eqref{intro3} a Grushin type degenerate elliptic equation. To the authors's best knowledge, there seems no results considering the analyticity of solutions of fully non-linear elliptic equations with degeneracy as in \eqref{intro3}.
\par Now we state our main results in the present paper.
\begin{theorem}\label{mainthm}
Suppose $u$ is a non-trivial solution of \eqref{intro1} and $\Omega\subset\subset \mathbb R^n$ is analytic and uniformly convex. Then $u$ is analytic in $\bar{\Omega}$ provided $q\in \mathbb Z^+$, i.e. $u\in C^\omega(\bar\Omega)$.
\end{theorem}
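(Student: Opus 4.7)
My plan splits the theorem into interior and boundary analyticity, the former essentially soft and the latter bearing all the difficulty. In $\Omega$ the solution $u$ is already known to be $C^{\infty}$ and strictly convex (Lions, Chou, Le--Savin), so $D^2u$ is positive definite on compacta, and writing the equation as $\log\det D^2u=q\log(-u)+\log\Lambda_q$ one obtains a fully nonlinear uniformly elliptic equation whose structure function is analytic; since $q\in\mathbb Z^+$ the right-hand side $\Lambda_q(-u)^q$ is a polynomial, and in particular an analytic function of $u$. The Morrey--Nirenberg analytic regularity theorem for fully nonlinear elliptic equations then yields $u\in C^{\omega}(\Omega)$, so the task reduces to analyticity in a full neighbourhood of each boundary point.

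To treat a boundary point, I would first flatten $\partial\Omega$ by an analytic diffeomorphism, available because $\partial\Omega$ is real analytic, so that locally $\Omega\cap B_r=\{y_n>0\}\cap B_r$. In the new coordinates \eqref{intro1} becomes a Monge-Amp\`ere-type equation with real-analytic coefficients, and the Le--Savin boundary expansion gives
\[
u(y)=\tfrac12|y'|^2+\tfrac{1}{(q+1)(q+2)}y_n^{q+2}+\text{(higher order)}.
\]
Writing $u=P_0+v$ and computing the cofactor matrix of $D^2u$ to leading order ($U^{nn}\sim 1$ and $U^{ii}\sim y_n^{q}$ for $i<n$) shows that the remainder $v$ satisfies a fully nonlinear equation whose linearisation has principal part equal to the Grushin-type degenerate elliptic operator
\[
\mathcal Lv=v_{nn}+y_n^{q}\Delta_{y'}v+\text{lower order terms},
\]
so analyticity of $u$ up to $\partial\Omega$ is equivalent to analyticity up to $\{y_n=0\}$ of the solution of such a degenerate elliptic equation with real-analytic coefficients and data.

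The heart of the proof is then a Morrey--Nirenberg-style nested induction adapted to the Grushin scaling $(y',y_n)\mapsto(\lambda y',\lambda^{2/(q+2)}y_n)$. For nested half-balls $B_\rho^+\subset B_R^+$ and multi-indices $\alpha,\beta$ I would establish inductive Cauchy estimates of the form
\[
\sup_{B_\rho^+}\bigl|\partial_{y'}^{\alpha}\partial_{y_n}^{\beta}v\bigr|\le \frac{M^{|\alpha|+\beta+1}\alpha!\,\beta!}{(R-\rho)^{|\alpha|+\beta}},
\]
using that tangential derivatives $\partial_{y'}$ commute with the flattened boundary condition (so they may be differentiated for free) and that each normal derivative $\partial_{y_n}$ can be traded, via the equation itself, against two tangential derivatives weighted by $y_n^{q}$. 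The integer assumption $q\in\mathbb Z^+$ enters at two crucial points: it keeps the nonlinearity $(-u)^q$ polynomial (hence analytic) at $u=0$, and it guarantees that every exponent of $y_n$ produced by repeated differentiation of the equation stays integer-valued, so that no fractional singular powers escape the induction. I expect the main obstacle to be the control of normal derivatives as $y_n\to 0$: the coefficient $y_n^{q}$ vanishes there while repeated differentiations of the equation generate terms weighted by negative powers of $y_n$, and absorbing these will likely require running the induction in a weighted $L^2$ framework together with Hardy-type inequalities, supplemented by a secondary induction on the order of vanishing of $v$ at $\{y_n=0\}$.
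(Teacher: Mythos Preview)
Your overall architecture matches the paper: interior analyticity is immediate from Morrey--Friedman since the equation is uniformly elliptic there, and all the work is at the boundary, where the linearisation is the Grushin operator $\partial_{nn}+x_n^q\Delta_{x'}$ and one runs a Morrey--Nirenberg type induction on the order of derivatives. The integer hypothesis on $q$ is used exactly as you say.

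Where you diverge is in the two main technical reductions. First, the paper does \emph{not} flatten $\partial\Omega$ by an analytic diffeomorphism and subtract the Le--Savin model polynomial. Instead it performs a Hodograph transformation (exchanging the roles of $x_n$ and $-u$, which is legitimate since $u_n\neq 0$ on $\partial\Omega$), obtaining a new Monge--Amp\`ere equation $\det D^2v=y_n^m v_n^{n+2}$ on a half-space with analytic Dirichlet data; then a partial Legendre transform in the tangential variables brings this to the scalar equation
\[
x_n^m(-u_n)^{n+2}\det D^2_{x'}u+u_{nn}=0,
\]
whose linear part is already exactly $u_{nn}+x_n^m\Delta_{x'}u$ after a pointwise normalisation. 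Both transforms preserve analyticity by the analytic implicit function theorem. This buys a much cleaner equation than your perturbative $u=P_0+v$ ansatz: the degeneracy sits visibly as the factor $x_n^m$, and there is no remainder $v$ whose second derivatives must be kept small to preserve the cofactor structure.

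Second, the induction is not run in sup-norm over nested half-balls. The paper follows Kato's scheme: it fixes a cutoff $\eta$ and proves
\[
\|\eta^{N-2}\partial_x^{N}u\|_{\widetilde W^{k,2}}\le A_0A_1^{(N-4)^+}(N-4)^+!
\]
in a tailored weighted Sobolev space $\widetilde W^{k,2}$ built from the Grushin weights $x_n^m$ and $x_n^{m-1}$. The base linear estimate (Lemma~2.1) is not proved from scratch but imported from the $L^2$ theory of Hong--Li and Hong--Wang for the model equation $u_{nn}+x_n^m\Delta_{x'}u=f$. Your anticipated difficulty---controlling normal derivatives as $y_n\to 0$---is handled precisely by these weighted $L^2$ estimates plus the commutator bookkeeping $[\mathcal L,\eta^{N-1}]$, and no Hardy inequalities or secondary induction on vanishing order are needed. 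Your sup-norm scheme is not obviously wrong, but you correctly sense that it would have to be rebuilt in a weighted $L^2$ framework; that is exactly what the paper does from the start, and the Hodograph/Legendre reduction is what makes that framework tractable.
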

\begin{remark}
The idea of the proof of Theorem \ref{mainthm} originated from \cite{Kato1996} for semi-linear elliptic equation. For the fully non-linear uniform elliptic case, we refer the readers to \cite{Ha06}.
\end{remark}

The present paper is organized as follows. In Section 2, we will collect some basic estimates for the linear degenerate elliptic equation \eqref{intro3}. Then we apply the ideas of \cite{Kato1996} to show that $u$ is analytic up to the boundary in Section 3.

\section{Estimates for linear model equation}
In the present section, we consider the following linear degenerate elliptic equations:
\begin{eqnarray}\label{LM1}
\begin{cases}
u_{nn}+x_n^m\Delta_{x'} u=f \quad \text{in}\quad \mathbb R^n_+,\\
u(x',0)=g(x'),\quad x'\in \mathbb R^{n-1},
\end{cases}
\end{eqnarray}
where $x=(x_1,\cdots,x_{n-1},x_n)=(x',x_n)$.

Firstly, we introduce some notations. Let $m$ be a positive integer. For a multi-index $\alpha=(\alpha_1, \alpha_2,\cdots, \alpha_n)$, we denote $\partial_x^{\alpha}=\partial_1^{\alpha_1}\partial_2^{\alpha_2} \cdots \partial_n^{\alpha_n}$ with $|\alpha|=\alpha_1+\alpha_2+\cdots+\alpha_n$ and $u_i=\partial_iu=\frac{\partial u}{\partial {x_i}}$($i=1,2,\cdots,n$).

For $k\ge 0$, we define the following weighted Sobolev space $\widetilde W^{k,2}(G_1)$ as
\begin{equation*}
\begin{split}
\|u\|_{\widetilde W^{k,2}(G_1)}=&\sum_{|\alpha|=k}\|\partial_{n}^2\partial_x^{\alpha}u\|_{L^2(G_1)}+\sum_{|\alpha|=k+2,\alpha_n\le 1}\|x_n^m \partial_{x}^{\alpha} u\|_{L^2(G_1)}+\sum_{|\alpha|=k}\|\partial_n\partial^{\alpha}_x u\|_{L^2(G_1)}\\
+&\sum_{|\alpha|=k+1,\alpha_n=0}\|x_n^{m-1}\partial_{x}^{\alpha} u\|_{L^2(G_1)}+\|\partial_n u(x',0)\|_{H^{k}(\mathbb R^{n-1})}+\|u\|_{H^{k}(G_1)},
\end{split}
\end{equation*}
where $G_1=\mathbb R^{n-1}\times [0,1)$.

In the following, we introduce two lemmas which are the key ingredients to prove the analyticity of the solutions to Monge-Amp{\`e}re equations.
\begin{lemma}\label{lem1}
Suppose $u\in C_c^\infty(\overline{\mathbb R^n_+})$ solves \eqref{LM1} with supp $u\subset G_1$. Then there holds
\begin{eqnarray}\label{aa1}
\|u\|_{\widetilde W^{0,2}(G_1)}\le C(\|f\|_{L^2(G_1)}+\|g\|_{H^1(\mathbb R^{n-1})})
\end{eqnarray}
for some constant $C$ depending only on $n$.
\end{lemma}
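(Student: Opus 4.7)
The plan is to establish \eqref{aa1} by testing the equation $\mathcal{L}u=f$ against a family of multipliers and combining the resulting identities, using crucially that $u$ is compactly supported in $G_1$ and that $u(\cdot,0)=g$.

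\emph{First-order energy.} Multiplying by $u$ and integrating by parts produces the identity
\[
\|\partial_n u\|_{L^2(G_1)}^2 + \|x_n^{m/2}\nabla_{x'}u\|_{L^2(G_1)}^2 = -\int_{G_1} u\,f\,dx - \int_{\mathbb R^{n-1}} g(x')\,\partial_n u(x',0)\,dx',
\]
the last term being the boundary contribution at $x_n=0$ of $\int u\,\partial_n^2 u\,dx_n$. Together with the Poincar\'e-type bound $\|u\|_{L^2(G_1)}\le\|\partial_n u\|_{L^2(G_1)}$ and the trace bound $\|\partial_n u(\cdot,0)\|_{L^2(\mathbb R^{n-1})}\le\|\partial_n^2 u\|_{L^2(G_1)}$, both immediate from integrating $u_n$ and $u_{nn}$ from $x_n$ to $1$ using the compact support of $u$, this controls $\|u\|_{L^2}$, $\|\partial_n u\|_{L^2}$, and the trace $\|\partial_n u(\cdot,0)\|_{L^2}$ in terms of $\|\partial_n^2 u\|_{L^2}$, $\|f\|_{L^2}$, and $\|g\|_{L^2}$.

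\emph{Squared-equation identity and the remaining second-order norms.} Squaring $\mathcal{L}u=f$ and integrating the cross term $2\int x_n^m\partial_n^2 u\,\Delta_{x'}u$ by parts twice in $(x_n,x')$ yields, for $m\ge 2$,
\[
\|\partial_n^2 u\|^2 + 2\|x_n^{m/2}\nabla_{x'}\partial_n u\|^2 + \|x_n^m\Delta_{x'}u\|^2 = \|f\|^2 + m(m-1)\int_{G_1} x_n^{m-2}|\nabla_{x'}u|^2\,dx,
\]
with an analogous formula for $m=1$ producing a boundary term $-\tfrac12\|\nabla_{x'}g\|_{L^2(\mathbb R^{n-1})}^2$ that is absorbed into $\|g\|_{H^1}^2$. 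Combined with the first-order energy, this supplies $\|\partial_n^2 u\|$, $\|x_n^m\Delta_{x'}u\|$, and $\|x_n^{m/2}\nabla_{x'}\partial_n u\|$ modulo the ``bad'' remainder $\int x_n^{m-2}|\nabla_{x'}u|^2$. From $\|x_n^m\Delta_{x'}u\|$ one recovers $\|x_n^m\partial^\alpha u\|$ for $|\alpha|=2$, $\alpha_n=0$, via the Calder\'on-Zygmund estimate $\|D_{x'}^2v\|_{L^2(\mathbb R^{n-1})}\le C\|\Delta_{x'}v\|_{L^2(\mathbb R^{n-1})}$ applied slicewise in $x_n$; the mixed terms $\|x_n^m\partial_{in}u\|$ ($i<n$) are dominated by $\|x_n^{m/2}\nabla_{x'}\partial_n u\|$ since $x_n\le 1$ on $G_1$; and the weighted first-order norm $\|x_n^{m-1}\nabla_{x'}u\|$ follows from the representation $u_i(x',x_n)=g_i(x')+\int_0^{x_n}u_{in}\,dt$ together with a weighted Cauchy-Schwarz in $x_n$.

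\emph{Main obstacle.} The crux is to absorb $m(m-1)\int x_n^{m-2}|\nabla_{x'}u|^2$ into the left-hand side at the cost of only $\|f\|^2+\|g\|_{H^1}^2$ when $m\ge 2$. Writing $\nabla_{x'}u=\nabla_{x'}g+\int_0^{x_n}\nabla_{x'}\partial_n u\,dt$ and Cauchy-Schwarz reduces the task to bounding the \emph{unweighted} $\|\nabla_{x'}\partial_n u\|_{L^2(G_1)}^2$, which cannot be extracted from the weighted $\|x_n^{m/2}\nabla_{x'}\partial_n u\|^2$ by Hardy-type inequalities alone. To close the gap I would Fourier-transform in $x'$, reducing to the ODE $\phi''-\lambda x_n^m\phi=\hat f$, $\phi(0)=\hat g(\xi')$, on $[0,1)$ parametrized by $\lambda=|\xi'|^2$, and apply the Grushin rescaling $y=\lambda^{1/(m+2)}x_n$; splitting the rescaled bad integral $\int y^{m-2}|\Phi|^2\,dy$ at $y\sim 1$ (using $y^{m-2}\le y^m$ for $y\ge 1$ and a Poincar\'e-type bound for $y\le 1$) shows that the bad term contributes at worst $\lambda^{3/(m+2)}|\hat g|^2\le(1+\lambda)|\hat g|^2$, which is subcritical with respect to $\|g\|_{H^1}^2=\int(1+|\xi'|^2)|\hat g|^2\,d\xi'$ and can therefore be absorbed after integrating in $\xi'$. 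This scaling step is where I expect the main technical difficulty to lie, since direct weighted-Sobolev manipulations do not detect the subcriticality in $\lambda$.
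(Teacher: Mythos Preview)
Your outline for $m=1$ matches the paper's Case~1 essentially verbatim (squared equation plus first-order energy, with the boundary term $\|\nabla_{x'}g\|_{L^2}^2$ going to the right-hand side---note your factor $-\tfrac12$ should be $+1$, but this is harmless). For $m\ge 2$ you and the paper diverge.

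The paper explicitly writes that ``for $m\ge 2$, the above method fails,'' and does \emph{not} attempt to absorb the $m(m-1)\int x_n^{m-2}|\nabla_{x'}u|^2$ term you identify. Instead it decomposes $u=v+w$ with $v=\mathcal B(g)$ solving the homogeneous equation with boundary data $g$ and $w=\mathcal T(f)$ solving the inhomogeneous equation with zero boundary, and then invokes as black boxes the operator estimates of Hong--Wang \cite{HongWang2009} for $\mathcal B$ and Hong--Li \cite{HongLi1996} for $\mathcal T$. These give $\|v\|_{\widetilde W^{0,2}}\lesssim\|\Lambda^{4/(m+2)}g\|_{L^2}\le\|g\|_{H^1}$ (the last inequality precisely because $4/(m+2)\le 1$ when $m\ge 2$) and $\|w\|_{\widetilde W^{0,2}}\lesssim\|f\|_{L^2}$. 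No further energy manipulation is needed.

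Your proposed route---carry the energy identities as far as they go, then Fourier-transform in $x'$ and rescale $y=|\xi'|^{2/(m+2)}x_n$---is in fact the method by which Hong--Wang and Hong--Li prove their operator bounds, so you are not going around the difficulty but heading straight into it. The sketch in your ``Main obstacle'' paragraph is the right heuristic, but the stated conclusion is too quick: after rescaling, the bad integral $\int y^{m-2}|\Phi|^2$ and the good ones $\int|\Phi''|^2$, $\int y^m|\Phi'|^2$, $\int y^{2m}|\Phi|^2$ all carry the \emph{same} prefactor $\lambda^{3/(m+2)}$, so the rescaling alone does not make the bad term subcritical. Your splitting at $y\sim 1$ plus Poincar\'e reduces matters to controlling $\int|\Phi'|^2$ and $\int y^m|\Phi|^2$, which the rescaled first-order energy supplies only in terms of $|\Phi(0)||\Phi'(0)|$ and $\|\Phi\|\|\tilde f\|$; closing this loop---in particular bounding $|\Phi'(0)|$ uniformly in $\lambda$ and handling the $\tilde f$ contribution you omit---is precisely the content of the cited lemmas and requires a genuine ODE analysis (Bessel-type solutions, or careful integration of the rescaled equation). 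So your plan is sound and more self-contained than the paper's, but the last paragraph hides a real piece of work rather than a routine computation.
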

\begin{proof}
We divide the proof of the present lemma into two cases.
\begin{itemize}
	\item[Case 1.]\par For $m=1$, we follow the steps of standard $W^{2,2}$-estimates as Laplacian equation. By \eqref{LM1}, one gets
\begin{eqnarray}\label{es1}
\int_{\mathbb R^n_+}(u_{nn}+x_n\Delta_{x'}u)^2dx=\int_{\mathbb R^n_+} f^2 dx.
\end{eqnarray}
Integrating by parts the crossproduct term yields
\begin{equation}
\begin{split}
& 2\int_{\mathbb R^n_+} x_n u_{nn} \Delta_{x'}u dx=-2\int_{\mathbb R^n_+} x_n \nabla_{x'}u_{nn} \cdot \nabla_{x'}u dx\\ =& 2\int_{\mathbb R^n_+} x_n |\nabla_{x'}u_{n}|^2 dx+2\int_{\mathbb R^n_+}\nabla_{x'}u_n\cdot\nabla_{x'} udx\\
=& 2\int_{\mathbb R^n_+} x_n |\nabla_{x'}u_{n}|^2 dx-\int_{\mathbb R^{n-1}}|\nabla_{x'}g|^2 dx'.
\end{split}
\end{equation}
Integrating by parts the other terms of \eqref{es1}, one gets
\begin{eqnarray}\label{es2}
\begin{split}
\int_{\mathbb R^n_+}u^2_{nn}+x_n^2|\nabla_{x'}^2u|^2+2\int_{\mathbb R^n_+} x_n |\nabla_{x'}u_{n}|^2 dx=\int_{\mathbb R^n_+}f^2 dx+\int_{\mathbb R^{n-1}}|\nabla_{x'}g|^2 dx'.
\end{split}
\end{eqnarray}
Multiplying $-u$ on both sides of \eqref{LM1} and integrating by parts, one obtains
\begin{eqnarray}\label{es3}
\begin{split}
\int_{\mathbb R^n_+}u_n^2+x_n|\nabla_{x'} u|^2 dx=-\int_{\mathbb R^{n-1}}u(x',0)u_n(x',0)dx'-\int_{\mathbb R^n_+}fudx.
\end{split}
\end{eqnarray}

\quad \quad Suppose
\begin{eqnarray}\label{es3.1}
\int_{\mathbb R^{n-1}}u^2_n(x',t)dx'=\inf_{x_n\in [0,1]}\int_{\mathbb R^{n-1}}u^2_n(x',x_n) dx'.
\end{eqnarray}
Then we know
\begin{equation}
\begin{split}
&\left|\int_{\mathbb R^{n-1}}u(x',0)u_n(x',0)dx'\right|\le C_\varepsilon\int_{\mathbb R^{n-1}}g^2dx'+\varepsilon\int_{\mathbb R^{n-1}}u_n^2(x',0)dx'\\
\le & C_\varepsilon\int_{\mathbb R^{n-1}}g^2dx'+C\varepsilon(\int_{\mathbb R^{n-1}}u_n^2(x',t)dx'+\int_{\mathbb R^n_+}u_{nn}^2 dx)\\
\le & C(\|g\|_{H^1(\mathbb R^{n-1})}^2+\|f\|_{L^2(\mathbb R^n_+)}^2)+\frac 14\int_{\mathbb R^{n}_+}u_n^2 dx.
\end{split}
\end{equation}
In getting the last inequality, we use \eqref{es2}, \eqref{es3.1} and take suitable $\varepsilon>0$ small.
Also, one has
\begin{equation}
\begin{split}
\int_{\mathbb R^n_+} u^2 dx\le C \int_{\mathbb R^{n-1}} g^2 dx'+\int_{\mathbb R^{n}_+}u_n^2 dx
\end{split}
\end{equation}
since $u$ has compact support.
Noticing that $\nabla_{x'}u=\partial_n(x_n \nabla_{x'}u)-x_n \nabla_{x'}u_{n}$, one knows
\begin{eqnarray}\label{es4}
\begin{split}
&\int_{\mathbb R^n_+}|\nabla_{x'} u|^2 dx\le 2\int_{\mathbb R^n_+}|\partial_n(x_n \nabla_{x'}u)|^2+x_n^2 |\nabla_{x'}u_{n}|^2 dx\\
= & 2\int_{\mathbb R^n_+} x_n^2 |\nabla_{x'}u_{n}|^2 dx-2\int_{\mathbb R^n_+} x_n\nabla_{x'}u\cdot (2\nabla_{x'}u_{n}+x_n\nabla_{x'}u_{nn})dx\\
= &4\int_{\mathbb R^n_+} x_n^2 |\nabla_{x'}u_{n}|^2dx.
\end{split}
\end{eqnarray}
Combining estimates \eqref{es2}-\eqref{es4} and the assumption supp $u\subset G_1$ yield the present lemma for $m=1$.
\item[Case 2.] For $m\ge 2$, the above method fails. We need employ some previous estimates for \eqref{LM1}.
One may view the solutions of \eqref{LM1} as $u=v+w$ where $v$ solves
\begin{eqnarray}\label{LM2}
\begin{cases}
\partial_{nn}v+x_n^m\Delta_{x'}v=0 \quad \text{in}\quad \mathbb R^n_+,\\
 v(x',0)=g(x'),\quad x'\in \mathbb R^{n-1},
\end{cases}
\end{eqnarray}
and $w$ solves
\begin{eqnarray}\label{LM3}
\begin{cases}
\partial_{nn}w+x_n^m\Delta_{x'}w=f \quad \text{in}\quad \mathbb R^n_+,\\
w(x',0)=0,\quad x'\in \mathbb R^{n-1}.
\end{cases}
\end{eqnarray}

\quad \quad Since $g(x')$ is compactly supported in $\mathbb R^{n-1}$, Hong-Wang [Lemma 3.1,\cite{HongWang2009}] proved that there exists an operator $\mathcal B$ such that $v(x)=\mathcal B(g)(x)$ solves \eqref{LM2} and satisfies the following estimates
\begin{eqnarray}\label{LM4}
\begin{split}
&\|\partial_{nn}(\mathcal B(g))(x_n)\|_{L^2_{x'}(\mathbb R^{n-1})}+\|x_n^{\frac m2}\Lambda_1\partial_n (\mathcal B(g))(x_n)\|_{L^2_{x'}(\mathbb R^{n-1})}\\&+\|x_n^m\Lambda_1^2(\mathcal B(g))(x_n)\|_{L^2_{x'}(\mathbb R^{n-1})}
+\|\partial_n(\mathcal B(g))(x',0)\|_{L^2_{x'}(\mathbb R^{n-1})}\\&+\|x_n^{\frac m2}\Lambda_1^{\frac{m+4}{m+2}}(\mathcal B(g))(x_n)\|_{L^2_{x'}(\mathbb R^{n-1})}\le C\|\Lambda^{\frac 4{m+2}}g\|_{L^2(\mathbb R^{n-1})},
\end{split}
\end{eqnarray}
where $C$ is a universal constant depending only on $n$, $\Lambda_1$ and $\Lambda$ represent the singular integral operators with symbols $|\xi|$ and $(1+|\xi|^2)^{\frac 12}$, $\xi=(\xi_1,\cdots,\xi_{n-1})$. Also the norm $L^2_{x'}(\mathbb R^{n-1})$ represents the $L^2-$norm over $\mathbb R^{n-1}$ in $x'$ variable.
Then by a simple integration over $x_n$, one obtains
\begin{eqnarray}\label{LM5}
\begin{split}
&\|\partial^2_{n}(\mathcal B(g))\|_{L^2(G_1)}+\|x_n^{\frac m2}\Lambda_1\partial_n (\mathcal B(g))\|_{L^2(G_1)}+\|x_n^m\Lambda_1^2(\mathcal B(g))\|_{L^2(G_1)}\\+&\|\partial_n(\mathcal B(g))\|_{L^2(G_1)}
+\|x_n^{\frac m2}\Lambda_1^{\frac{m+4}{m+2}}(\mathcal B(g))\|_{L^2(G_1)}+\|\partial_n(\mathcal B(g))(x',0)\|_{L^2(\mathbb R^{n-1})}\\+&\|\mathcal B(g)\|_{L^2(G_1)}\le C\|\Lambda^{\frac 4{m+2}}g\|_{L^2(\mathbb R^{n-1})}.
\end{split}
\end{eqnarray}

\quad \quad Since $f$ is compactly supported in $\overline{\mathbb R^n_+}$. Hong-Li [Theorem 3.2,\cite{HongLi1996}](also see Lemma 3.4 in \cite{HongWang2009}) proved that there exists an operator $\mathcal T$ such that $w=\mathcal T(f)$ solves \eqref{LM3} and satisfies the following estimates
\begin{eqnarray}\label{LM6}
\begin{split}
&\|\partial_{n}^2(\mathcal Tf)\|_{L^2(G_1)}+\|x_n^m \Lambda\partial_n(\mathcal Tf)\|_{L^2(G_1)}+\|x_n^m\Lambda^2(\mathcal Tf)\|_{L^2(G_1)}+\|\partial_n\mathcal Tf\|_{L^2(G_1)}\\
+&\|x_n^{\frac m2}\Lambda (\mathcal Tf)\|_{L^2(G_1)}+\|\partial_n\mathcal Tf(x',0)\|_{L^2(\mathbb R^{n-1})}+\|\mathcal Tf\|_{L^2(G_1)}\le C \|f\|_{L^2(G_1)}.
\end{split}
\end{eqnarray}
Then combining \eqref{LM5} and \eqref{LM6}, one gets
\begin{eqnarray}\label{LM7}
\begin{split}
&\|\partial_{n}^2u\|_{L^2(G_1)}+\sum_{i=1}^{n-1}\|x_n^m \partial_{in} u\|_{L^2(G_1)}+\sum_{i,j=1}^{n-1}\|x_n^m\partial_{ij} u\|_{L^2(G_1)}+\|\partial_n u\|_{L^2(G_1)}\\
+&\sum_{i=1}^{n-1}\|x_n^{\frac m2}\partial_{i} u\|_{L^2(G_1)}+\|\partial_n u(x',0)\|_{L^2(\mathbb R^{n-1})}+\|u\|_{L^2(G_1)}\\
\le& C \left(\|f\|_{L^2(G_1)}+\|g\|_{H^1(\mathbb R^{n-1})}\right).
\end{split}
\end{eqnarray}
The last inequality of \eqref{LM7} comes from
\begin{equation*}
\|\Lambda^{\frac 4{m+2}} g\|_{L^2(\mathbb R^{n-1})}=\|(1+|\xi|^2)^{\frac 2{m+2}}\hat g\|_{L^2(\mathbb R^{n-1})}\le \|(1+|\xi|^2)^{\frac{1}{2}} \hat g\|_{L^2(\mathbb R^{n-1})}\le \|g\|_{H^1(\mathbb R^{n-1})}
\end{equation*}
because of $m\ge 2$.
Noticing that $\frac m2\le m-1$ for $m\ge 2$, one knows
\begin{equation*}
\|x_n^{\frac m2}\partial_{x'} u\|_{L^2(G_1)}\ge \|x_n^{m-1}\partial_{x'}u\|_{L^2(G_1)}
\end{equation*}
which implies the present lemma for $m\ge 2$.
	\end{itemize}
\end{proof}
In fact, differentiating the linear equation \eqref{LM1}, one can get the following lemma.
\begin{lemma}\label{lemest1}
Suppose $u\in C_c^\infty(\overline{\mathbb R^n_+})$ solves \eqref{LM1} with supp $u\subset G_1$. Then there holds
\begin{eqnarray}\label{lemest1.1}
\|u\|_{\widetilde W^{k,2}(G_1)}\le  C_k \left(\|f\|_{H^{k}(G_1)}+\|g\|_{H^{k+1}(\mathbb R^{n-1})}\right),\quad k\ge 0
\end{eqnarray}
for some positive constant $C_k$ depending only on $k,m,n$.
\end{lemma}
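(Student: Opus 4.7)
The plan is induction on $k\geq 0$, with the case $k=0$ being exactly Lemma \ref{lem1}. Assume the estimate is known for all orders up to $k-1$ and aim to prove it at level $k$.

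For the tangential contributions to $\|u\|_{\widetilde W^{k,2}(G_1)}$, let $\beta$ be any multi-index with $\beta_n=0$ and $|\beta|=k$. Since the coefficients in \eqref{LM1} depend only on $x_n$, the function $v_\beta:=\partial_{x'}^\beta u$ still lies in $C_c^\infty(\overline{\mathbb R^n_+})$ with support in $G_1$ and satisfies
\begin{equation*}
(v_\beta)_{nn}+x_n^m\Delta_{x'}v_\beta=\partial_{x'}^\beta f,\qquad v_\beta(x',0)=\partial_{x'}^\beta g(x').
\end{equation*}
Applying Lemma \ref{lem1} to $v_\beta$ and summing over all such $\beta$ directly controls every summand of $\|u\|_{\widetilde W^{k,2}(G_1)}$ whose multi-index $\alpha$ has $\alpha_n\leq 1$ (in particular the weighted piece $\sum_{|\alpha|=k+1,\alpha_n=0}\|x_n^{m-1}\partial^\alpha u\|_{L^2}$), the boundary norm $\|\partial_n u(x',0)\|_{H^k(\mathbb R^{n-1})}$, and the purely tangential part of $\|u\|_{H^k(G_1)}$, by a constant multiple of $\|f\|_{H^k(G_1)}+\|g\|_{H^{k+1}(\mathbb R^{n-1})}$; any mixed piece of $\|u\|_{H^k(G_1)}$ is already covered by the induction hypothesis.

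The remaining summands of $\|u\|_{\widetilde W^{k,2}(G_1)}$ are $\|\partial_n^{j+2}\partial_{x'}^{\alpha'}u\|_{L^2}$ and $\|\partial_n^{j+1}\partial_{x'}^{\alpha'}u\|_{L^2}$ with $j\geq 1$ and $|\alpha'|=k-j$, all carrying at least two normal derivatives. These are reached by finite descent on the normal order via the equation $\partial_n^2 u=f-x_n^m\Delta_{x'}u$. A Leibniz expansion gives
\begin{equation*}
\partial_n^{j+2}\partial_{x'}^{\alpha'}u=\partial_n^{j}\partial_{x'}^{\alpha'}f-\sum_{l=0}^{\min(j,m)}\binom{j}{l}\frac{m!}{(m-l)!}\,x_n^{m-l}\,\partial_n^{j-l}\Delta_{x'}\partial_{x'}^{\alpha'}u,
\end{equation*}
and, using $x_n^{m-l}\leq 1$ on $G_1$, each $l\geq 1$ summand has total order $\leq k+1$ and is controlled by Step 1 or the induction hypothesis at level $k-l\leq k-1$. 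The $l=0$ summand reduces (after dropping the bounded factor $x_n^m$) to $\partial_n^2\partial^\delta u$ with $|\delta|=k$ and $\delta_n=j-2$, i.e.\ a term of the same form as the left-hand side but with normal order reduced by two; finite descent on $j$ then closes the estimate, the base cases $j=0,1$ being covered by Step 1. The norm $\|\partial_n^{j+1}\partial_{x'}^{\alpha'}u\|_{L^2}$ is handled identically, differentiating the equation one fewer time.

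The main obstacle lies in the combinatorial bookkeeping: each Leibniz expansion produces weighted summands $x_n^{m-l}\partial_n^{j-l}\Delta_{x'}\partial_{x'}^{\alpha'}u$ at a range of normal orders, and one must verify that every one matches either a term controlled by Step 1, a term in the induction hypothesis at level $k-l\leq k-1$, or a term of the same type at strictly smaller normal order so that the descent terminates. The precise structure of the $\widetilde W^{k,2}$-norm, in particular the weight $x_n^m$ being allowed only for $\alpha_n\leq 1$ at order $k+2$ and the weight $x_n^{m-1}$ being permitted only for tangential derivatives of order $k+1$, is calibrated precisely so that every such piece is absorbed.
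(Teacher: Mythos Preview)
Your argument is correct. Both your proof and the paper's begin identically: differentiate \eqref{LM1} tangentially and apply Lemma~\ref{lem1} to each $v_\beta=\partial_{x'}^\beta u$ with $|\beta|=k$, which immediately controls all pieces of $\|u\|_{\widetilde W^{k,2}(G_1)}$ carrying at most one normal derivative inside the index $\alpha$ (the second and fourth sums in full, and the $\alpha_n=0$ parts of the first and third sums), together with the boundary piece $\|\partial_n u(\cdot,0)\|_{H^k}$.

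The treatments diverge for the remaining terms $\partial_n^{2}\partial_x^\alpha u$ with $\alpha_n\ge 1$. The paper proceeds by applying Lemma~\ref{lem1} \emph{again} to each $\partial_n^l\partial_{x'}^\beta u$; this forces it to identify the boundary datum $\partial_n^l\partial_{x'}^\beta u(x',0)$, which is done by restricting the differentiated equation to $\{x_n=0\}$ and invoking the Sobolev trace theorem, and to absorb the commutator terms $\partial_n^r(x_n^m)\partial_n^{l-r}\Delta_{x'}\partial_{x'}^\beta u$ case by case. You instead use $u_{nn}=f-x_n^m\Delta_{x'}u$ as a pointwise identity: the Leibniz expansion of $\partial_n^{j}\partial_{x'}^{\alpha'}(x_n^m\Delta_{x'}u)$ produces, after discarding the bounded weights $x_n^{m-l}\le 1$ on $G_1$, only terms of total order $\le k+1$ (for $l\ge 1$, fitting into Step~1 or the inductive hypothesis at level $\le k-1$) together with one term of the same type at normal order $j-2$, so a finite descent closes the loop without ever revisiting Lemma~\ref{lem1} or touching boundary traces. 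Your route is therefore more elementary---no trace theorem, no separate analysis of $\partial_n^l\partial_{x'}^\beta u(x',0)$---while the paper's route is more uniform in that the same $\widetilde W^{0,2}$ estimate is applied verbatim at every stage. Both are valid; yours is the classical ``tangential estimates plus recover normal derivatives from the equation'' paradigm, specialized to this degenerate setting.
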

\begin{proof}
For $k=0$, this is just \eqref{aa1}.  We now prove the present lemma by induction on $k$. Suppose the estimate \eqref{lemest1.1} is true for $k$, we need to prove \eqref{lemest1.1} holds for $k+1$.
\par
First, we consider $\partial_x^{\alpha}u=\partial_{x'}^{\alpha}u$, $|\alpha|=k+1$, $\alpha_n=0$. Differentiating \eqref{LM1} with respect to $x'$ for $k+1$ times, one gets
\begin{eqnarray}\label{LM8}
\begin{cases}
\partial^2_{n}\partial_{x'}^{\alpha}u +x_n^m\Delta_{x'} \partial_{x'}^{\alpha}u=\partial_{x'}^{\alpha}f \quad \text{in}\quad \mathbb R^n_+,\\
\partial_{x'}^{\alpha}u(x',0)=\partial_{x'}^{\alpha}g(x'),\quad x'\in \mathbb R^{n-1}.
\end{cases}
\end{eqnarray}
Applying \eqref{aa1} to \eqref{LM8}, one gets
\begin{eqnarray}\label{LM14}
\|\partial_{x'}^{\alpha}u\|_{\widetilde W^{0,2}(G_1)}\le C(\|\partial_{x'}^{\alpha}f\|_{L^2(G_1)}+\|\partial_{x'}^{\alpha}g\|_{H^1(\mathbb R^{n-1})}).
\end{eqnarray}
Especially, we obtain
\begin{eqnarray}\label{LM10}
\begin{split}
&\sum_{|\beta|=k+1,\beta_n=0}\|\partial_{n}\partial_{x'}^{\beta}u(x',0)\|_{L^2(\mathbb R^{n-1})}+\sum_{|\beta|=k+2,\beta_n=0}\|x_n^{m-1}\partial_{x'}^{\beta} u\|_{L^2(G_1)}\\
& \le C(\|f\|_{H^{k+1}(G_1)}+\|g\|_{H^{k+2}(\mathbb R^{n-1})}).
\end{split}
\end{eqnarray}
\par Then we consider the case $\partial_{x}^{\alpha}u=\partial_n\partial_{x'}^{\alpha'} u$ with $|\alpha'|=k$.  Differentiating \eqref{LM1} with respect to $x'$ for $k$ times and then with respect to $x_n$, one can derive
\begin{eqnarray}\label{LM9}
\partial_n^2\partial_n\partial_{x'}^{\alpha'}u+x_n^m\Delta_{x'} \partial_n\partial_{x'}^{\alpha'}u=\partial_n\partial_{x'}^{\alpha'}f-mx_n^{m-1}\Delta_{x'}\partial_{x'}^{\alpha'}u \quad \text{in}\quad \mathbb R^n_+.
\end{eqnarray}
For the boundary term, by \eqref{LM10}, we know
\begin{eqnarray}\label{LM10.1}
\|\partial^{\alpha'}_{x'}u_n(x',0)\|_{H^1({\mathbb R^{n-1}})}\leq C(\|f\|_{H^{k+1}(G_1)}+\|g\|_{H^{k+2}(\mathbb R^{n-1})}).
\end{eqnarray}
Applying \eqref{aa1} to \eqref{LM9} and using \eqref{LM10} and \eqref{LM10.1}, one gets
\begin{equation*}
\begin{split}
\|\partial_n\partial_{x'}^{\alpha'}u\|_{\widetilde W^{0,2}(G_1)}\le& C(\|f\|_{H^{k+1}(G_1)}+\|x_n^{m-1}\Delta_{x'}\partial_{x'}^{\alpha'}u\|_{L^2(G_1)}+\|\partial^{\alpha'}_{x'}u_n(x',0)\|_{H^1(\mathbb R^{n-1})})\\
\le &C(\|f\|_{H^{k+1}(G_1)}+\|g\|_{H^{k+2}(\mathbb R^{n-1})}).
\end{split}
\end{equation*}
\par The last case is $\partial^{\alpha}_x u=\partial_n^l \partial^{\beta}_{x'} u$, $l\ge 2$, $|\beta|=k+1-l$. Differentiating \eqref{LM1} with respect to $x'$ for $k+1-l$ times and then with respect to $x_n$ for $l$ times, one can derive
\begin{eqnarray}\label{LM13}
\begin{split}
&\partial_n^2(\partial_n^l \partial^{\beta}_{x'} u)+x_n^m\Delta_{x'} (\partial_n^l \partial^{\beta}_{x'} u)\\
=&\partial_n^l \partial^{\beta}_{x'} f-\sum_{r=1}^{\min(m,l)}\frac{l!}{r!(l-r)!}\partial_n^r(x_n^m)\partial_n^{l-r}\Delta_{x'}\partial_{x'}^\beta u \quad \text{in}\quad \mathbb R^n_+.
\end{split}
\end{eqnarray}

We first consider the boundary term. Applying  $\partial_n^{l-2}\partial_{x'}^\beta$ to \eqref{LM1}, one gets
\begin{eqnarray}\label{LM12}
\partial_n^l \partial^{\beta}_{x'} u=-\partial_{n}^{l-2}\partial_{x'}^{\beta}(x_n^m\Delta_{x'}u)+\partial_n^{l-2}\partial_{x'}^\beta f.
\end{eqnarray}
 Restricting  \eqref{LM12} on $x_n=0$, then we obtain
\begin{eqnarray}\label{1}
\|\partial_n^{l-2}\partial_{x'}^\beta f(x',0)\|_{H^1(\mathbb R^{n-1})}\le C\|f\|_{H^{k+1}(G_1)}
\end{eqnarray}
by the trace theorem of Sobolev space. For the term $\partial_{n}^{l-2}\partial_{x'}^{\beta}(x_n^m\Delta_{x'}u)$,
it matters only if $l\ge m+2$ and equals to $ m! C_{l-2}^m\partial_n^{l-2-m}\partial_{x'}^\beta \Delta_{x'}u(x',0)$.
\par If  $l=m+2$, then we know
\begin{equation*}
\|\partial_{x'}^\beta \Delta_{x'}u(x',0)\|_{H^1(\mathbb R^{n-1})}\le \|g\|_{H^{k+1}(\mathbb R^{n-1})}.
\end{equation*}
\par If $l\ge m+3$, then we know $\partial_n^{l-2-m}\partial_{x'}^\beta \Delta_{x'}u(x',0)=\partial_n(\partial_n^{l-3-m}\partial_{x'}^\beta \Delta_{x'}u)(x',0)$. This implies
\begin{eqnarray}\label{2}
\|\partial_n^{l-2-m}\partial_{x'}^\beta \Delta_{x'}u(x',0)\|_{H^1(\mathbb R^{n-1})}&\leq & C\|\partial_n\partial_x^{\tilde \beta}u(x',0)\|_{H^1(\mathbb R^{n-1})}\\ \nonumber
&\leq& C_k(\|f\|_{H^k(G_1)}+\|g\|_{H^{k+1}(\mathbb R^{n-1})})
\end{eqnarray}
by the induction assumption for some $\tilde \beta$ with $|{\tilde \beta}|=k-m\le k-1$.

Combining \eqref{1}-\eqref{2}, one obtains
\begin{eqnarray}\label{3}
\|\partial_n^l \partial^{\beta}_{x'} u(x',0)\|_{H^1(\mathbb R^{n-1})}\le C_k(\|f\|_{H^k(G_1)}+\|g\|_{H^{k+1}(\mathbb R^{n-1})}).
\end{eqnarray}

According to \eqref{aa1}, the remaining thing is to analysis the term $\displaystyle\sum_{r=1}^{\min(m,l)}\partial_n^r(x_n^m)\partial_n^{l-r}\Delta_{x'}\partial_{x'}^\beta u$.  We only need to take care of $x_n^{m-1}\partial_n^{l-1}\partial_{x'}^{\beta}\Delta_{x'}u$, $x_n^{m-2}\partial_n^{l-2}\partial_{x'}^{\beta}\Delta_{x'}u(m\ge 2)$, $l\ge 2$, $|\beta|=k+1-l$.

For the term $x_n^{m-1}\partial_n^{l-1}\partial_{x'}^{\beta}\Delta_{x'}u$:
\begin{itemize}
\item[(1)]If  $l=2$, i.e.
$x_n^{m-1}\partial_n^{l-1}\partial_{x'}^{\beta}\Delta_{x'}u=x_n^{m-1}\partial_n\partial_{x'}^{\beta}\Delta_{x'}u$, $|\beta|=k-1$. Then by estimate \eqref{LM14}, this term can be controlled by $\|f\|_{H^{k+1}(G_1)}+\|g\|_{H^{k+2}(\mathbb R^{n-1})}$.

\item[(2)] If $l\ge 3$, then by induction assumption on $k$, we have
\begin{equation*}
\begin{split}
&\|x_n^{m-1}\partial_n^{l-1}\partial_{x'}^{\beta}\Delta_{x'}u\|_{L^2(G_1)}\le \|\partial_n^2(\partial_n^{l-3}\partial_{x'}^\beta \Delta_{x'}u)\|_{L^2(G_1)}\\
 \le& \|u\|_{\widetilde W^{k,2}(G_1)}\le C_k(\|f\|_{H^{k}(G_1)}+\|g\|_{H^{k+1}(\mathbb R^{n-1})}).
 \end{split}
\end{equation*}
\end{itemize}

Similarly, for the term $x_n^{m-2}\partial_n^{l-2}\partial_{x'}^{\beta}\Delta_{x'}u(m\ge 2)$:
\begin{itemize}
\item[(1)]If $l=2$, i.e. $x_n^{m-2}\partial_n^{l-2}\partial_{x'}^{\beta}\Delta_{x'}u=x_n^{m-2}\partial_{x'}^{\beta}\Delta_{x'}u$, $|\beta|=k-1$. Then by estimate \eqref{LM14}, this term can be controlled by $\|f\|_{H^{k+1}(G_1)}+\|g\|_{H^{k+2}(\mathbb R^{n-1})}$.

\item[(2)]If $l\ge 3$, then by induction assumption on $k$, we have
\begin{equation*}
\begin{split}
&\|x_n^{m-2}\partial_n^{l-2}\partial_{x'}^{\beta}\Delta_{x'}u\|_{L^2(G_1)}\le \|\partial_n(\partial_n^{l-3}\partial_{x'}^\beta\Delta_{x'}u)\|_{L^2(G_1)} \\
\le &\|u\|_{\widetilde W^{k,2}(G_1)}\le C_k(\|f\|_{H^{k}(G_1)}+\|g\|_{H^{k+1}(\mathbb R^{n-1})}).
\end{split}
\end{equation*}
\end{itemize}

Overall, we obtain
\begin{equation*}
\|\partial_n^l \partial^{\beta}_{x'} u\|_{\widetilde W^{0,2}(G_1)}\le C(\|f\|_{H^{k+1}(G_1)}+\|g\|_{H^{k+2}(\mathbb R^{n-1})}),\quad \forall l\ge 2, |\beta|=k+1-l.
\end{equation*}
This ends the proof of present lemma.
\end{proof}

\begin{lemma}\label{lem2.3}
For any $u,v\in \widetilde W^{k,2}(G_1)$,   there exists a constant $C_k$ such that
\begin{equation}
\|uv\|_{\widetilde W^{k,2}(G_1)}\le C_k \|u\|_{\widetilde W^{k,2}(G_1)}\|v\|_{\widetilde W^{k,2}(G_1)}
\end{equation}
provided $k\ge n+3$.
\end{lemma}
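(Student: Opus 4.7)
The plan is to establish the Banach algebra property by applying the Leibniz rule to each of the six constituents of $\|uv\|_{\widetilde W^{k,2}(G_1)}$ and controlling every resulting product via the Sobolev embedding $H^s(G_1)\hookrightarrow L^\infty(G_1)$ for $s>n/2$; the hypothesis $k\geq n+3$ will emerge precisely from the weighted term of total derivative order $k+2$.

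First I would record that $\|\cdot\|_{\widetilde W^{k,2}(G_1)}$ dominates $\|u\|_{H^k(G_1)}$ and hence, by Sobolev embedding, $\|\partial_x^\beta u\|_{L^\infty(G_1)}\leq C\|u\|_{\widetilde W^{k,2}(G_1)}$ whenever $|\beta|<k-n/2$. For each of the six norms in the definition I would then expand
\[
\partial_x^\alpha(uv)=\sum_{\beta\leq\alpha}\binom{\alpha}{\beta}\,\partial_x^\beta u\cdot\partial_x^{\alpha-\beta}v
\]
and, for every summand, put the factor of smaller derivative order into $L^\infty$ while keeping the other factor, together with any weight $x_n^m$ or $x_n^{m-1}$ (or the prefactor $\partial_n$), in the corresponding weighted $L^2$. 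The unweighted piece $\|uv\|_{H^k(G_1)}$ is then handled by the classical Banach algebra property of $H^k(G_1)$ for $k>n/2$; the boundary trace $\|\partial_n(uv)(\cdot,0)\|_{H^k(\mathbb R^{n-1})}$ by expanding $\partial_n(uv)=u_nv+uv_n$ and using the Banach algebra property of $H^k(\mathbb R^{n-1})$ (valid since $k>(n-1)/2$), after first noting that $u(\cdot,0)$ itself lies in $H^k(\mathbb R^{n-1})$ because $\partial_n u(\cdot,0)$ and $\partial_n^2\partial_{x'}^\alpha u$ are both controlled by the norm. The unweighted $\partial_n^2\partial_x^\alpha$ and $\partial_n\partial_x^\alpha$ terms succumb to the same $L^\infty$--$L^2$ splitting.

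The decisive case is the weighted term $\|x_n^m\partial_x^\alpha(uv)\|_{L^2(G_1)}$ with $|\alpha|=k+2$ and $\alpha_n\leq 1$. For a Leibniz splitting $\alpha=\beta+\gamma$ with $|\beta|\leq|\gamma|$, I would put $\partial_x^\beta u$ in $L^\infty$; this requires $|\beta|<k-n/2$, which combined with $|\beta|\leq(k+2)/2$ forces $(k+2)/2<k-n/2$, i.e.\ $k>n+2$, matching the threshold $k\geq n+3$. The remaining factor $x_n^m\partial_x^\gamma v$ with $|\gamma|\geq(k+2)/2$ is then bounded in $L^2$ by $\|v\|_{\widetilde W^{k,2}(G_1)}$: if $\gamma_n\leq 1$ this is immediate from the weighted slot of the norm; if $\gamma_n\geq 2$, one uses $x_n^m\leq 1$ on $G_1$ to drop the weight and fall back on the $\partial_n^2\partial_x^{\gamma-2e_n}$ control together with $\|v\|_{H^k(G_1)}$.

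The main obstacle I anticipate is the careful bookkeeping between the admissible weight--derivative combinations in the definition of $\widetilde W^{k,2}(G_1)$ and the Leibniz splittings: the norm allows only specific pairings of the weights $x_n^m, x_n^{m-1}$ with multi-indices constrained by $\alpha_n\leq 1$ or $\alpha_n=0$, so when the high-order factor in a splitting carries too many $\partial_n$'s or the residual weight does not match an existing slot, one must trade powers of $x_n$ for additional $\partial_n$-control or pass to the complementary $x_n^{m-1}$ weight. Once this verification is completed for every admissible $(\alpha,\beta)$ across the six constituent norms and summed, the desired bound $\|uv\|_{\widetilde W^{k,2}(G_1)}\leq C_k\|u\|_{\widetilde W^{k,2}(G_1)}\|v\|_{\widetilde W^{k,2}(G_1)}$ follows with a constant $C_k$ depending only on $k, m, n$.
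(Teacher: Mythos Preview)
Your proposal is correct and follows essentially the same approach as the paper: apply the Leibniz rule, observe that in any splitting $\alpha=\beta+\gamma$ with $|\alpha|\le k+2$ one has $\min(|\beta|,|\gamma|)\le (k+2)/2$, and use the Sobolev embedding $H^{k-(k+2)/2}\hookrightarrow L^\infty$ (valid precisely when $k>n+2$) to put the lower-order factor in $L^\infty$ while the higher-order factor stays in a weighted $L^2$ slot of the norm. The paper's proof is a brief sketch of exactly this argument; your version simply fills in the case-by-case bookkeeping across the six constituents of the norm.
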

\begin{proof}
For any multi-index $\alpha\in \mathbb N^n$, one has
\begin{equation*}
\partial_x^\alpha(uv)=\sum_{\beta+\gamma=\alpha}\frac{\alpha!}{\beta!\gamma!}\partial_x^{\beta}u\partial_x^{\gamma}v.
\end{equation*}
Since the highest order of derivative in $\widetilde W^{k,2}(G_1)$ is $k+2$, we know
$\min(|\beta|,|\gamma|)\le \frac {k+2}2$.  Then for the product $\partial_x^{\beta}u \partial_x^{\gamma}v$, at least one term is in $H^{k-\frac {k+2}2}(\mathbb R^n_+)\hookrightarrow L^\infty(\mathbb R^n_+)$ provided $k>n+2$ by Sobolev embedding theorem. This implies the present lemma.
\end{proof}
In the following, we give a lemma which is essentially Lemma 1 in \cite{Friedman1958}.
\begin{lemma}\label{lem1}
Let $B_1\times \mathbb B_R$ be the domain in $\mathbb R^n\times \mathbb R^L$. Assume that
$\Phi(x,y)$ is a polynomial and $p$ is a positive integer. Let $\eta\in C_c^\infty(B_1)$ is a cut-off function.
 Then, there exist positive constants $A_0$, $\widetilde A_0$ and $A_1$, depending only on $n$,  $L$, $k$, $\eta$ and the polynomial $\Phi(x,y)$, such that, for any $C^p$-function $y=(y_1,\cdots,y_L): B_1\rightarrow \mathbb B_R$, if for any $x\in B_1$ and any non-negative integer $l\le p$,
 \begin{equation*}
 \sum_{i=1}^L \|\eta^l \partial_x^l y_i(x)\|_{\widetilde W^{k,2}(B_1^+)}\le A_0A_1^{(l-2)^+}{(l-2)^+}!,
 \end{equation*}
 for some $k>n+2$.
Then, for any $x\in B_1^+$,
\begin{equation*}
\|\eta^p \partial_x^p[\Phi(x,y(x))]\|_{\widetilde W^{k,2}(B_1^+)}\le \widetilde A_0 A_1^{(p-2)^+}(p-2)^+!.
\end{equation*}
Here, if no confusion occurs, the meaning of $l$ and $p$ can be vary from multi-index to pure integer.
\end{lemma}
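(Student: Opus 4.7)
The plan is to expand $\partial_x^p[\Phi(x,y(x))]$ via the multivariable Leibniz rule (a Fa\`a di Bruno type expansion) and then control each resulting product of derivatives of the $y_i$'s in the algebra norm provided by Lemma \ref{lem2.3}. Since $\Phi$ is a polynomial, it suffices to treat a generic monomial $\Phi(x,y)=x^{\sigma}y_{i_1}\cdots y_{i_r}$ where $r$ is bounded by the degree of $\Phi$, and then sum finitely many such contributions. The polynomial factor $x^{\sigma}$ contributes only harmless constants after differentiation, so the core task is to estimate
\begin{equation*}
\eta^p\,\partial_x^p\bigl(y_{i_1}(x)\cdots y_{i_r}(x)\bigr)=\sum_{\alpha_1+\cdots+\alpha_r=p}\frac{p!}{\alpha_1!\cdots\alpha_r!}\,(\eta^{|\alpha_1|}\partial_x^{\alpha_1}y_{i_1})\cdots(\eta^{|\alpha_r|}\partial_x^{\alpha_r}y_{i_r})
\end{equation*}
in $\widetilde W^{k,2}(B_1^+)$, where I have distributed the cutoff $\eta^p=\eta^{|\alpha_1|}\cdots\eta^{|\alpha_r|}$ among the factors.

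The key reductions are two. First, since $k\ge n+3$, Lemma \ref{lem2.3} says $\widetilde W^{k,2}(G_1)$ is a Banach algebra, so each product of $r$ factors is controlled by the product of the individual $\widetilde W^{k,2}$-norms up to a constant $C_k^{r-1}$, which is absorbed into $\widetilde A_0$ because $r\le\deg\Phi$ is bounded. Second, the hypothesis gives
\begin{equation*}
\|\eta^{|\alpha_j|}\partial_x^{\alpha_j}y_{i_j}\|_{\widetilde W^{k,2}(B_1^+)}\le A_0\,A_1^{(|\alpha_j|-2)^+}\bigl((|\alpha_j|-2)^+\bigr)!,
\end{equation*}
so the product is bounded by $A_0^{r}A_1^{\sum(|\alpha_j|-2)^+}\prod_j((|\alpha_j|-2)^+)!$. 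What remains is a purely combinatorial factorial estimate of the form
\begin{equation*}
\sum_{\alpha_1+\cdots+\alpha_r=p}\frac{p!}{\alpha_1!\cdots\alpha_r!}\,\prod_{j=1}^{r}\bigl((|\alpha_j|-2)^+\bigr)!\;\le\;C_r\,(p-2)^+!,
\end{equation*}
coupled with the trivial bound $\sum_j(|\alpha_j|-2)^+\le (p-2)^+$. This is the classical factorial combinatorics underlying Friedman's lemma: roughly, $\frac{p!}{\alpha_1!\cdots\alpha_r!}\prod((|\alpha_j|-2)^+)!\le C\,(p-2)^+!/(\alpha_1!\cdots\alpha_r!)^{\epsilon}$ after absorbing small factorials, and the number of compositions of $p$ into $r$ nonnegative integers is polynomial in $p$ and absorbed by a geometric term in $A_1$.

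The main obstacle I expect is precisely the bookkeeping in the last combinatorial step: one must handle separately the small $\alpha_j$ (where $(|\alpha_j|-2)^+=0$ contributes no factorial but the Leibniz coefficient is large) and the large $\alpha_j$ (where the induction hypothesis kicks in), and verify that the geometric constant $A_1$ can be chosen large enough (depending on $n$, $L$, $k$, $\eta$, $\Phi$) so that all the combinatorial losses are absorbed into $A_1^{(p-2)^+}$ while the prefactor remains $\widetilde A_0$, independent of $p$. Once this is done, summing over the finitely many monomials of $\Phi$ and over the finitely many choices of distributing factors among $x^{\sigma}$ versus the $y$-factors yields the stated bound.
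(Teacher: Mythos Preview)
Your approach is essentially the same as the paper's: reduce to monomials, expand by Leibniz, distribute $\eta^p$ among the factors, invoke the algebra property of $\widetilde W^{k,2}$ from Lemma~\ref{lem2.3}, apply the hypothesis, and finish with a factorial combinatorial estimate. The paper resolves your acknowledged ``main obstacle'' by proving, via induction on $p$ and on the number of factors, the explicit bound
\[
\sum_{k_0+\cdots+k_r=p}\frac{(k_0-2)^+!\cdots(k_r-2)^+!}{k_0!\cdots k_r!}\le \frac{C_r}{(p+1)^2},
\]
which, after multiplying back by $p!$, yields exactly the $(p-2)^+!$ growth you need.
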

\begin{proof}
By our assumptions, we know
\begin{equation*}
\Phi(x,y(x))=\sum_{|\alpha|+|\beta|\le d} C_{\alpha\beta} x^\alpha y(x)^\beta.
\end{equation*}
Here $d$ is the degree of the polynomial $\Phi(x,y)$ and $x^\alpha=x_1^{\alpha_1}\cdots x_n^{\alpha_n}$.
Then for $x^\alpha y(x)^\beta=x^\alpha y_{i_1}(x)\cdots y_{i_{|\beta|}}(x)$, $1 \leq i_1,\cdots, i_{|\beta|} \leq L $, one knows
\begin{eqnarray}\label{701}
\partial_x^{p} (x^\alpha y(x)^\beta)=\sum_{k_0+k_1+\cdots+k_{|\beta|}=p} \frac{p!}{k_0!k_1!\cdots k_{|\beta|}!} \partial_x^{k_0}(x^\alpha)\partial_{x}^{k_1}(y_{i_1}(x))\cdots \partial_x^{k_{|\beta|}}(y_{i_{|\beta|}}(x)).
\end{eqnarray}
Let  $C_0,C_1$ be the positive constants such that
\begin{equation*}
\begin{split}
\sum_{|\alpha|\le d}\|\eta^{l}\partial_x^{l}(x^\alpha)\|_{\widetilde W^{k,2}(B_1^+(0))}\le C_0C_1^{(l-2)^+}(l-2)^+!,\quad l\ge 0.
\end{split}
\end{equation*}
Let $\|\cdot\|$ be $\|\cdot\|_{\widetilde W^{k,2}(B_1^+(0))}$.  Then by Lemma \ref{lem2.3} and the assumptions, we know
\begin{eqnarray}
\begin{split}
&\|\eta^p\partial_x^{p} (x^\alpha y(x)^\beta)\|\\
\le& C_k^{|\beta|+1}\sum_{k_0+k_1+\cdots+k_{|\beta|}=p} \frac{p!}{k_0!k_1!\cdots k_{|\beta|}!} C_0C_1^{(k_0-2)^+}(k_0-2)^+!\Pi_{j=1}^{|\beta|}A_0A_1^{(k_j-2)^+}(k_j-2)!\\
\le & (A_0C_k)^{|\beta|+1}A_1^{(p-2)^+}p!\sum_{k_0+k_1+\cdots+k_{|\beta|}=p}\frac{(k_0-2)^+!(k_1-2)^+!\cdots (k_{|\beta|}-2)^+!}{k_0!k_1!\cdots k_{|\beta|}!}
\end{split}
\end{eqnarray}
provided $A_0\ge C_0,A_1\ge C_1$.
In the following, we will  show that  there exists a constant $C_2>0$  such that
\begin{eqnarray}\label{cl1}
\sum_{k_0+k_1+\cdots+k_{|\beta|}=p}\frac{(k_0-2)^+!(k_1-2)^+!\cdots (k_{|\beta|}-2)^+!}{k_0!k_1!\cdots k_{|\beta|}!}\le \frac{C_2(8\pi^2(d+1))^{|\beta|+1}}{(p+1)^2},\quad \forall p\in \mathbb N
\end{eqnarray}
by induction for all $|\beta|\le d$. It is easy to see that, we can choose $C_2$ large enough such that \eqref{cl1} holds for all $0\le p\le 10$. Suppose \eqref{cl1} holds for $p\ge 10$. Then for $p+1$, one has
\begin{equation*}
\begin{split}
&\sum_{k_0+k_1+\cdots+k_{|\beta|}=p+1}\frac{(k_0-2)^+!(k_1-2)^+!\cdots (k_{|\beta|}-2)^+!}{k_0!k_1!\cdots k_{|\beta|}!}\\
\le &(d+1)\sum_{k_{|\beta|}=1}^{p+1}\frac{(k_{|\beta|}-2)^+!}{k_{|\beta|}!} \sum_{k_0+\cdots+k_{|\beta|-1}=p+1-k_{|\beta|}}\frac{(k_0-2)^+!(k_1-2)^+!\cdots (k_{|\beta|-1}-2)^+!}{k_0!k_1!\cdots k_{|\beta|-1}!}\\
\le &(d+1)C_2(8\pi^2(d+1))^{|\beta|}\sum_{k_{|\beta|}=1}^{p+1}\frac{(k_{|\beta|}-2)^+!}{k_{|\beta|}!(p+2-k_{|\beta|})^2}\\
\le & 4(d+1)C_2(8\pi^2(d+1))^{|\beta|}\sum_{k_{|\beta|}=1}^{p+1}\frac{1}{k_{|\beta|}^2(p+2-k_{|\beta|})^2}\\
\le &4(d+1)C_2(8\pi^2(d+1))^{|\beta|}\left(\frac{9}{4(p+1)^2}\left(\sum_{k_{|\beta|}=1}^{[\frac{p+1}3]}\frac{1}{k_{|\beta|}^2}+\sum_{k_{|\beta|}=2[\frac{p+1}3]}^{p+1}\frac{1}{(p+2-k_{|\beta|})^2}\right)\right.\\
+&\left.\sum_{k_{|\beta|}=[\frac{p+1}3]}^{2[\frac{p+1}3]}\frac{1}{k_{|\beta|}^2(p+2-k_{|\beta|})^2}\right)
\le\frac{C_2(8\pi^2(d+1))^{|\beta|+1}}{(p+2)^2}.
\end{split}
\end{equation*}
In getting the last inequality of the above, we used
\begin{equation*}
\sum_{l=1}^{+\infty}\frac{1}{l^2}=\frac{\pi^2}{6},\quad \frac{27}{p+1}\le \pi^2,\quad \frac{(p+2)^2}{(p+1)^2}\le 2.
\end{equation*}
This ends the proof of \eqref{cl1}. By \eqref{cl1}, one knows
\begin{equation*}
\begin{split}
\|\eta^p\partial_x^{p} (x^\alpha y(x)^\beta)\| \le (8\pi^2 A_0C_k(d+1))^{d+1}A_1^{(p-2)^+}(p-2)^+!.
\end{split}
\end{equation*}
This implies
\begin{equation*}
\begin{split}
\|\eta^p \partial_x^p[\Phi(x,y(x))]\| \le (8\pi^2 A_0C_k(d+1))^{d+1}A_1^{(p-2)^+}(p-2)^+!\sum_{|\alpha|+|\beta|\le d} |C_{\alpha\beta}|.
\end{split}
\end{equation*}
By letting $\tilde A_0=(8\pi^2 A_0C_k(d+1))^{d+1}\sum_{|\alpha|+|\beta|\le d} |C_{\alpha\beta}|$, we finish the proof of present lemma.
\end{proof}

\section{Analyticity of the solutions}
 Through out this section,  we let $q=m\in \mathbb Z^+$ in \eqref{intro1}. Then   $u$ solves
\begin{eqnarray}\label{301}
\begin{cases}
\det D^2 u=\Lambda_m(-u)^m \quad \text{in}\quad \Omega,\\
u=0 \quad \text{on} \quad\partial \Omega.
\end{cases}
\end{eqnarray}
In the following, we omit the constant $\Lambda_m$. Since \eqref{301} is uniformly elliptic in the interior of $\Omega$, then applying the results of \cite{Friedman1958,Morrey1958a}, we can get $u\in C^\omega(\Omega)$. The remaining thing is to show $u$ is analytic up to the boundary.
\par  Suppose $\Omega\subset\mathbb R^n_+$, $0\in \partial\Omega$ and $x_n=0$ is the supporting plane of $\Omega$ at $0$. Then we know the non-trivial solution $u$ of \eqref{301} satisfies
\begin{itemize}
\item[(I)] $u\in C^\infty(\overline{\Omega})$ and $u_n(0)<0$, $\nabla_{x'}u(0)=0$.
\item[(II)] $\lambda I\le D^2_{x'}u(0)\le \Lambda I$ for two positive constants $\lambda,\Lambda$.
\end{itemize}
For the proof of (I) and (II), we refer the readers  to [Theorem B, \cite{HongHuangWang2011}] for $n=2$ and [Theorem 1.4, \cite{LeSavin2017}] for arbitrary dimension.

 Then as in \cite{LeSavin2017}, we take the following Hodo-graph transformation near $0$,
 \begin{equation*}
y_n=-x_{n+1},\quad y_{n+1}=x_n, \quad y_k=x_k (1\le k\le n-1).
\end{equation*}
In the new coordinates, the graph of $u$ near $0$ can be represented as
\begin{equation*}
y_{n+1}=v(y) \quad\text{in}\quad  \{y\in\mathbb R^n: \quad y_n>0\}\cap B_\delta(0)
\end{equation*}
for some $\delta>0$ small enough.
This implies
\begin{eqnarray}\label{333}
u(y',v(y))+y_n=0 \quad \text{in}\quad \{y\in\mathbb R^n: \quad y_n>0\}\cap B_\delta(0).
\end{eqnarray}
Then,  differentiating  the above equation directly yields
\begin{equation*}
u_\alpha+u_nv_\alpha=0,\quad u_nv_n+1=0,\quad \alpha=1,\cdots,n-1.
\end{equation*}
Since the Gauss curvature will not change, one has
\begin{eqnarray}\label{hodograph}
\begin{cases}
\det D^2v=K(1+|\nabla v|^2)^{\frac{n+2}2}=\det D^2 u\frac{(1+|\nabla v|^2)^{\frac{n+2}2}}{(1+|\nabla u|^2)^{\frac{n+2}2}}=y_n^m v_n^{n+2} \quad \text{in}\quad B^+_{\delta}(0),\\
v(y',0)=\phi(y'),\quad |y'|\le \delta,
\end{cases}
\end{eqnarray}
where $y_n=\phi(y')$ represents the boundary $\partial\Omega$ near $0$.
Moreover, the Hodo-graph transformation preserves the analyticity.
\begin{lemma}\label{lemhodo}
Let $v$ be given by \eqref{333} where $u$ is the non-trivial solution of \eqref{301}.
Then $v$ is analytic near $0$ implies $u$ is analytic near $0$.
\end{lemma}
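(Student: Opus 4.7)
The plan is to invert the hodograph transformation by applying the real-analytic inverse function theorem to the map $\Psi(y', y_n) := (y', v(y', y_n))$, and then to read off $u$ as (minus) the last component of the inverse. Since $v$ is assumed to be analytic up to the boundary at $0$, its Taylor series at $0$ converges in some full ball $B_r(0)$ to an analytic function, so $\Psi$ extends to a real-analytic map defined on a full neighborhood of $0$ in $\mathbb{R}^n$.

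First I would verify that $D\Psi(0)$ is nonsingular. A direct computation gives
\[
D\Psi(0) = \begin{pmatrix} I_{n-1} & 0 \\ \nabla_{y'}v(0) & v_n(0) \end{pmatrix},
\]
so $\det D\Psi(0) = v_n(0)$. Differentiating the hodograph identity $u(y', v(y)) + y_n = 0$ with respect to $y_n$ yields $u_n(0)\,v_n(0) + 1 = 0$, and property (I) gives $u_n(0) < 0$, hence $v_n(0) = -1/u_n(0) > 0$. Therefore $\Psi$ is a real-analytic diffeomorphism from a neighborhood of $0$ in $y$-space onto a neighborhood of $0$ in $x$-space, and by the analytic inverse function theorem its inverse has the form $\Psi^{-1}(x', x_n) = (x', \psi(x', x_n))$ with $\psi$ real-analytic near $0$.

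To conclude, I would substitute $y_n = \psi(x', x_n)$ into the identity $u(y', v(y', y_n)) = -y_n$. By the very definition of $\psi$ we have $v(x', \psi(x', x_n)) = x_n$, so this substitution produces
\[
u(x', x_n) = -\psi(x', x_n)
\]
in a neighborhood of $0$. Restricting this identity to $\bar{\Omega} \cap B_\rho(0)$ for some small $\rho > 0$ gives the analyticity of $u$ at the boundary point $0$.

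The only delicate point is the interpretation of "$v$ analytic near $0$": the hodograph construction only places $v$ on the closed half-ball $\overline{B^+_\delta}$. Real-analyticity up to the boundary, however, means precisely that the Taylor series of $v$ at $0$ converges in some full ball, so $v$ admits a genuine two-sided real-analytic extension to which the inverse function theorem can be applied. Aside from remarking on this extension, the argument involves no substantial obstacle; the nondegeneracy $v_n(0) \neq 0$, which is the one analytic input needed, is immediate from property (I) of the Monge--Amp\`ere solution $u$.
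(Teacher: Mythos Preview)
Your argument is correct and essentially the same as the paper's: both hinge on the nondegeneracy $v_n(0)=-1/u_n(0)\neq 0$ and invoke the real-analytic implicit/inverse function theorem to recover $u$ (up to sign) from $v$. The only cosmetic difference is that you phrase it via the inverse function theorem applied to $\Psi(y',y_n)=(y',v(y))$, whereas the paper applies the implicit function theorem to $F(y,y_{n+1})=v(y)-y_{n+1}$; your additional remark on extending $v$ analytically across $\{y_n=0\}$ makes explicit a point the paper leaves implicit.
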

\begin{proof}
Let $F(y,y_{n+1})=v(y)-y_{n+1}$ and $F(0)=0$. Then by the assumption of the present lemma, we know $F(y,y_{n+1})$ is analytic in $B_\delta(0)$ for some $\delta>0$ small.
Also by (I), one has
\begin{equation*}
F_{n}(0,0)=v_n(0)=-\frac{1}{u_n(0)}\ne 0.
\end{equation*}
  By the real analytic implicit function theorem(Theorem 1.8.3,\cite{KrantzParks2002}), we know there exists $u$ analytic near $0$ such that
  \begin{equation*}
  F(y_1,\cdots,y_{n-1},u(y_1,\cdots,y_{n-1},y_{n+1}),y_{n+1})=0,
  \end{equation*}
  which implies the present lemma.
\end{proof}
Differentiating \eqref{333} with respect to $y'$ for two times, one obtains
\begin{equation*}
u_{\alpha\beta}+u_{\alpha n}v_{\beta}+u_{n\beta}v_{\alpha}+u_{nn}v_\alpha v_{\beta}+u_n v_{\alpha\beta}=0,\quad 1\le \alpha,\beta\le n-1.
\end{equation*}
This implies
\begin{equation}
v_{\alpha\beta}(0)=-\frac{u_{\alpha\beta}(0)}{u_n(0)}
\end{equation}
is a positive definite matrix.
Then we can perform the partial Legendre transformation to the solutions of \eqref{hodograph}:
\begin{equation}
z_i=v_i(y)\quad (i\le n-1),\quad z_n=y_n,\quad v^*(z)=y'\cdot\nabla_{y'}v-v(y).
\end{equation}
Set the partial Legendre transformation by $z=T(y)$.
Then $v^*$ should satisfy
\begin{eqnarray}\label{pl1}
\left\{\begin{array}{ll}
z_n^m(-v_n^*)^{n+2}\det D^2_{z'}v^*+v^*_{nn}=0 \quad \text{in}\quad \Sigma^+=T(B^+_\delta(0))\subset\mathbb R^n_+,\\
v^*=\phi^* \quad \text{on}\quad \partial\Sigma^+\cap \{z_n=0\},
\end{array}
\right.
\end{eqnarray}
where $\phi^*$ is the Legendre transformation of $\phi$. Also the partial Legendre transformation preserves the analyticity.
\begin{lemma}\label{lemplt}Let $v^*$ be the partial Legendre transformation of $v$.
Then $v^*$ is analytic near $0$ implies $v$ is analytic near $0$.
\end{lemma}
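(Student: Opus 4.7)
The plan is to invert the partial Legendre transformation and show that it is an analytic diffeomorphism near $0$, so that $v$ is recovered from $v^*$ as a composition of analytic maps.

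First, I would observe that by differentiating the defining identity $v^*(z) = y'\cdot\nabla_{y'}v(y) - v(y)$ along the substitution $z_\alpha = v_\alpha(y)$ ($\alpha \le n-1$) and $z_n = y_n$, one obtains directly
\[
v^*_\alpha(z) = y_\alpha \quad (\alpha \le n-1), \qquad v^*_n(z) = -v_n(y).
\]
Hence the inverse map $S := T^{-1}$ has the explicit form $S(z) = (v^*_1(z),\dots,v^*_{n-1}(z),z_n)$, which is analytic in a neighborhood of $0$ by the hypothesis on $v^*$. The Jacobian $DS$ is block upper-triangular with diagonal blocks $(v^*_{\alpha\beta})_{\alpha,\beta\le n-1}$ and the scalar $1$. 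At $z=0$ the block $(v^*_{\alpha\beta}(0))$ is precisely the inverse of $(v_{\alpha\beta}(0))$; since $v_{\alpha\beta}(0) = -u_{\alpha\beta}(0)/u_n(0)$ is positive definite by property (II) of $u$, the matrix $DS(0)$ is non-singular.

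By the real analytic inverse function theorem (Theorem 1.8.3 of \cite{KrantzParks2002}), $T = S^{-1}$ is analytic in some neighborhood of $0$. The relation $v^*(z) = y'\cdot z' - v(y)$ then rearranges to
\[
v(y) = \sum_{\alpha=1}^{n-1} y_\alpha\,T_\alpha(y) - v^*\bigl(T(y)\bigr),
\]
exhibiting $v$ near $0$ as a composition of analytic functions, hence itself analytic.

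The only delicate point is the non-singularity of $DS(0)$ at the degenerate boundary point; this rests crucially on the uniform positive definiteness $D^2_{x'}u(0)\ge \lambda I$ inherited from property (II), without which the partial Legendre transform would degenerate at $0$. Apart from this observation, the argument is simply the standard involutivity of the partial Legendre transform carried out in the analytic category, in complete parallel with Lemma \ref{lemhodo} for the hodograph transform.
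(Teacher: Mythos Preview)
Your proof is correct and takes essentially the same approach as the paper's: both invoke the real analytic implicit/inverse function theorem, using the non-degeneracy $\det D^2_{z'}v^*(0)\neq 0$ (equivalently $\det D^2_{y'}v(0)\neq 0$ from property~(II)), to invert the partial Legendre transform analytically. The paper phrases this as solving the implicit system $F_0=v^*(z)-y'\cdot z'+t=0$, $F_i=y_i-\partial_{z_i}v^*=0$ ($1\le i\le n-1$) for $(t,z')$ in terms of $(y',z_n)$, whereas you explicitly write the inverse map $S(z)=(\nabla_{z'}v^*(z),z_n)$ and apply the inverse function theorem to it --- the content is identical.
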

\begin{proof}
Consider
\begin{equation}
\begin{split}
& F_0(y',z,t)=v^*(z)-y'\cdot z'+t,\quad F_i(y',z,t)=y_i-\partial_{z_i}v^*,\quad i=1,\cdots,n-1,
\end{split}
\end{equation}
where $y'\in \mathbb R^{n-1},z\in\mathbb R^n, t\in \mathbb R$ and $z'=(z_1,\cdots,z_{n-1})$. Then $F_0(0)=F_i(0)=0$, $i=1,\cdots,n-1$.
By assumptions, we know  $F_0(y',z,t)$, $F_i(y',z,t)$ are real analytic functions and
\begin{equation}
\det\left(\frac{\partial (F_0,\cdots,F_{n-1})}{\partial(t,z')}\right)(0)=(-1)^{n-1}\det D_{z'}^2v^*(0)\ne 0.
\end{equation}
Again by  the real analytic implicit function theorem(Theorem 1.8.3,\cite{KrantzParks2002}), we know $F_0(\tilde y',\tilde z,\tilde t)=0$, $F_i(\tilde y',\tilde z,\tilde t)=0$ determine real analytic functions $\tilde t(y',z_n)$, $\tilde z'(y',z_n)$. This implies the present lemma.
\end{proof}
Hence, by the conclusions of Lemma \ref{lemhodo} and Lemma \ref{lemplt},  we only need to consider the following type of equation:
\begin{eqnarray}\label{MA-1}
\begin{cases}
x_n^m(-u_n)^{n+2}\det D^2_{x'}u+u_{nn}=0 \quad \text{in}\quad B_1^+,\\
u=\varphi \quad \text{on}\quad \{x_n=0\}\cap B_1(=B'_{1}).
\end{cases}
\end{eqnarray}
\begin{theorem}\label{mainthm2}
Suppose $u\in C^\infty(\overline{B_1^+})$  and $\varphi\in C^\omega(B'_{1})$  solve \eqref{MA-1}. Moreover, $u$ satisfies
\begin{equation*}
\lambda_{\min}(D^2_{x'}u)\ge c_0>0,\quad |u_n(x',0)|\ge c_0>0 \quad \text{in} \quad \overline{B_1^+}.
\end{equation*}
Then $u\in C^\omega(\overline{B^+_{\frac{1}{2}}})$.
\end{theorem}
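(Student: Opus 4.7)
The plan is to apply Kato's inductive scheme \cite{Kato1996} to the fully nonlinear degenerate equation \eqref{MA-1} using the linear degenerate estimates of Section~2. Interior analyticity follows from the classical results of Friedman and Morrey since \eqref{MA-1} is uniformly elliptic away from $\{x_n=0\}$, so the real issue is analyticity at each boundary point; by translation we may assume it to be $0$. The target is an analytic-growth bound
\[
\|\eta^p\,\partial_x^\alpha u\|_{\widetilde W^{k,2}(B_1^+)}\le A_0 A_1^{(p-2)^+}(p-2)^+!\qquad (|\alpha|=p,\ p\ge 0),
\]
for a fixed $k\ge n+3$, a cutoff $\eta\in C_c^\infty(B_1)$ with $\eta\equiv 1$ near $0$, and constants $A_0,A_1$ depending only on $u,\varphi,c_0,k,m,n$. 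Sobolev embedding in $\widetilde W^{k,2}$ then gives a convergent Taylor series at $0$, hence analyticity at $0$, and the construction is local so it applies at every boundary point of $\overline{B_{1/2}^+}$.

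\textbf{Reduction to the model operator and induction setup.} The assumptions $\lambda_{\min}(D^2_{x'}u(0))\ge c_0$ and $|u_n(0)|\ge c_0$ make $F[u]=u_{nn}+x_n^m(-u_n)^{n+2}\det D^2_{x'}u$ a perturbation, near $0$, of the model operator $\mathcal L=\partial_n^2+x_n^m\Delta_{x'}$: after an affine rescaling of $x'$ and a scaling of $x_n$, the linearization of $F$ at $u(0)$ coincides with $\mathcal L$. In the new coordinates the full linearization at $u$ reads
\[
L_u[w]=w_{nn}+x_n^m\bigl(A^{ij}(x)w_{ij}+B(x)w_n\bigr),
\]
with $A^{ij}(0)=\delta^{ij}$ and coefficients $A^{ij},B$ smooth polynomial functions of $(u_n,D^2_{x'}u)$. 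Applying $\partial_x^\alpha$ with $|\alpha|=p$ to $F[u]=0$ and isolating the principal part yields
\[
L_u[\partial_x^\alpha u]=G_\alpha,
\]
where $G_\alpha$ is, by Leibniz and the chain rule, a sum of products of derivatives of $u$ of total order at most $p+2$ in which no single factor carries the full order.

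\textbf{Closing the induction.} The base cases $p=0,1,2$ follow from the $C^\infty$ hypothesis on $u$. Assuming the bound for all orders $\le p-1$, multiply the equation $L_u[\partial_x^\alpha u]=G_\alpha$ by $\eta^p$, move the $O(|x|)$-parts of $A^{ij}-\delta^{ij}$ and of $B$ into the right-hand side, and apply Lemma \ref{lemest1} with right-hand side $\eta^p G_\alpha+[\mathcal L,\eta^p]\partial_x^\alpha u$ and boundary datum given either by $\partial^\alpha\varphi$ when $\alpha_n=0$, or obtained from \eqref{MA-1} by iteratively solving $u_{nn}$ in terms of tangential and first-order normal derivatives when $\alpha_n\ge 1$. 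The boundary contribution inherits analytic growth directly from $\varphi\in C^\omega$; the commutator and the absorbed lower-order perturbations involve strictly fewer derivatives and are controlled by the inductive hypothesis. The bulk term $\|\eta^pG_\alpha\|_{H^k(B_1^+)}$ is dominated by the Friedman-type polynomial-composition lemma applied with $\Phi(x,y)=x_n^m(-y_0)^{n+2}\det(y_{ij})$ and $y=(u_n,D^2_{x'}u)$, together with the multiplication Lemma \ref{lem2.3}, yielding exactly $A_0A_1^{(p-2)^+}(p-2)^+!$ once $A_1$ is chosen large enough independently of $p$. This closes the induction. The chief obstacle is reconciling the variable-coefficient linearization $L_u$ with the constant-coefficient model operator of Lemma \ref{lemest1}: one must verify that the perturbations $x_n^m(A^{ij}-\delta^{ij})\partial_{ij}$ and $x_n^m B\partial_n$ are absorbable in the weighted space $\widetilde W^{k,2}$ rather than in ordinary Sobolev space, and that the commutator $[\mathcal L,\eta^p]$ preserves these weights. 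The smallness gained by restricting $\eta$ to a small ball around $0$, together with the weight structure of $\widetilde W^{k,2}$ that is tailored to $\mathcal L$, is what makes this absorption possible, and verifying it in detail is the heart of the argument.
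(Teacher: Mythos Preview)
Your overall strategy---Kato's inductive scheme, linearization at the origin to the model operator $\mathcal L$, Lemma~\ref{lemest1} for the a~priori estimate, and the Friedman-type composition Lemma~\ref{lem1} for the nonlinear source---is exactly the paper's. Two concrete points, however, are not as you describe them and constitute gaps.

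\textbf{The normal commutator is not lower order.} You assert that $[\mathcal L,\eta^p]\partial_x^\alpha u$ ``involves strictly fewer derivatives and is controlled by the inductive hypothesis.'' For the tangential part $[x_n^m\Delta_{x'},\eta^p]$ this is fine because the factor $x_n^m$ survives. But the leading term of $[\partial_n^2,\eta^p]\partial_x^\alpha u$ is $2p\,\eta_n\eta^{p-1}\partial_n\partial_x^\alpha u$, which carries one \emph{more} derivative than $\partial_x^\alpha u$, one \emph{fewer} power of $\eta$, and no weight $x_n^m$; it is \emph{not} covered by the previous step of the induction, and smallness of the support does not help. The paper's device is to take the cutoff in tensor-product form $\eta(x)=\chi(x_1)\cdots\chi(x_n)$, so that $\mathrm{supp}\,\eta_n\subset\{r\le x_n\le 2r\}$ lies strictly in the interior; on that slab one invokes the \emph{interior} analyticity of $u$ (which you correctly note follows from Friedman--Morrey) as an independent, already-established analytic bound, see \eqref{anainterior1}. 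Without this mechanism the commutator term cannot be closed.

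\textbf{Tangential derivatives must precede normal ones.} You treat all multi-indices $\alpha$ with $|\alpha|=p$ simultaneously. Because $\partial_n$ does not commute with $x_n^m\Delta_{x'}$, each normal differentiation produces a term $m\,x_n^{m-1}\Delta_{x'}(\cdots)$ with the borderline weight $x_n^{m-1}$---precisely the weight built into $\widetilde W^{k,2}$. Controlling $\|x_n^{m-1}\eta^{N-1}\Delta_{x'}\partial_{x'}^N u\|_{H^k}$ requires the $\widetilde W^{k,2}$-bound for $\eta^{N-1}\partial_{x'}^{N+1}u$, i.e.\ the \emph{same} inductive level but with a purely tangential index. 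The paper therefore orders the step $N\to N+1$ as: first all-tangential $\partial_{x'}^{N+1}u$ (Lemma~\ref{leminduction1}), then one normal derivative $\partial_n\partial_{x'}^N u$ (Lemma~\ref{leminduction2}, feeding in Lemma~\ref{leminduction1}), then $\partial_n^l\partial_{x'}^{N+1-l}u$ for $l\ge 2$ (Lemma~\ref{leminduction3}). The paper also runs the induction at three Sobolev scales $\widetilde W^{k-i,2}$, $i=0,1,2$, so that the various rewritings $\partial_{x'}^2(x_n^m\,\cdot)$, $\partial_n\partial_{x'}(x_n^m\,\cdot)$ used to manufacture the weight $x_n^m$ can be absorbed; your single-scale bound $\|\eta^p\partial_x^\alpha u\|_{\widetilde W^{k,2}}$ would not close under these manipulations.
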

\begin{remark}
Combining Theorem \ref{mainthm2}, Lemma \ref{lemhodo} and Lemma \ref{lemplt}, we know Theorem \ref{mainthm} holds.
\end{remark}
Since analyticity is a local property, we restrict our discussion on $B_{2r}^+$ for some $r$ small enough to be determined later.
Then our aim of the remaining paragraph is to show that there exist two positive constants $A_0,A_1$ such that
\begin{eqnarray}\label{ana1}
\|\eta^{N-2}\partial_x^{N}u\|_{\widetilde W^{k-i,2}} \le A_0 A_1^{(N-4)^+}(N-4-i)^+!,\quad i=0,1,2.
\end{eqnarray}
Here, we fix a large enough integer $k$ such that Lemma \ref{lem2.3} holds. $\eta$ is a cut-off function and has the following form
\begin{equation*}
\eta(x)=\chi(x_1)\cdots\chi(x_n),
\end{equation*}
where  $0\le \chi \le 1$ is a cut-off function satisfying
\begin{equation}
\chi(t)\equiv 1,\quad \text{in}\quad [-r,r],\quad \chi\equiv 0,\quad \text{in}\quad [-2r,2r]^c.
\end{equation}
In fact, we just need to consider $\eta$ in $\{x\in B_{2r}| x_n>0\}$.
And if no confusion occurs, the meaning of $N$ can vary from multi-index to pure positive integer.

In the following, we will prove \eqref{ana1} via induction. Suppose \eqref{ana1} is true for $0,1,2,\cdots,N$. We need to show it holds for $N+1$.

Firstly, differentiating  equation \eqref{MA-1} with respect to $x_l$, $l=1,\cdots,n-1$, one has
\begin{eqnarray}\label{6}
\begin{cases}
\displaystyle \sum_{i,j=1}^{n-1}x_n^m(-u_n)^{n+2}U^{ij}\partial_{ij}(\partial_l u)+\partial_{nn}\partial_l u=-\partial_l(x_n^m(-u_n)^{n+2})\det D^2_{x'}u \quad \text{in}\quad B_1^+,\\
\partial_lu=\partial_l\varphi \quad \text{on}\quad \{x_n=0\}\cap B_1,
\end{cases}
\end{eqnarray}
where $U^{ij}$ is the cofactor matrix of $D_{x'}^2 u$.
Set
$$G=-\partial_l(x_n^m(-u_n)^{n+2})\det D^2_{x'}u.$$
Then we know $G$ is a polynomial with arguments $x,u,\nabla u,\nabla^2 u$.
Without loss of generality(after a transformation of coordinates), we may assume
\begin{equation*}
(-u_n)^{n+2}U^{ij}(0)=\delta_{ij}.
\end{equation*}
Then we can rewrite the equation of \eqref{6} as
\begin{eqnarray}\label{302}
x_n^m\Delta_{x'}(\partial_l u)+\partial_{nn}\partial_lu=G+ \sum_{i,j=1}^{n-1}a^{ij}x_n^m\partial_{ij}\partial_lu \quad \text{in}\quad B_1^+,
\end{eqnarray}
where
\begin{equation*}
a^{ij}=\delta_{ij}-(-u_n)^{n+2}U^{ij},\quad a^{ij}(x)=O(|x|),\quad |x|<<1.
\end{equation*}

Differentiating \eqref{302} for $N$ times with respect to $x'$, one gets
\begin{eqnarray}\label{MA-2}
x_n^m\Delta_{x'}\partial_{x'}^{N+1}u+\partial_{nn}(\partial_{x'}^{N+1}u)=\partial_{x'}^{N}G+ \sum_{i,j=1}^{n-1}\partial_{x'}^N(a^{ij}x_n^m\partial_{ij}\partial_lu) \quad \text{in}\quad B_1^+.
\end{eqnarray}
Multiplying \eqref{MA-2} with $\eta^{N-1}$, one obtains
\begin{eqnarray}\label{MA-3}
\begin{split}
&x_n^m\Delta_{x'}(\eta^{N-1}\partial_{x'}^{N+1}u)+\partial^2_{n}(\eta^{N-1}\partial_{x'}^{N+1}u)\\
=&\quad \eta^{N-1}\partial_{x'}^{N}G+ \sum_{i,j=1}^{n-1}\eta^{N-1}\partial_{x'}^N(a^{ij}x_n^m\partial_{ij}\partial_lu)+[\mathcal L,\eta^{N-1}]\partial_{x'}^{N+1}u \quad \text{in}\quad B_1^+,
\end{split}
\end{eqnarray}
where
\begin{eqnarray}
\begin{split}
[\mathcal L,\eta^{N-1}]\partial_{x'}^{N+1}u&=[x_n^m \Delta_{x'}, \eta^{N-1}]\partial_{x'}^{N+1}u+[\partial_n^2, \eta^{N-1}]\partial_{x'}^{N+1}u\\
                       &=x_n^m (\Delta_{x'} \eta^{N-1})\partial_{x'}^{N+1}u+2x_n^m\sum_{i=1}^{n-1}\partial_i(\eta^{N-1})\partial_i(\partial_x^{N+1}u)\\
                       & \quad\quad +(\partial_n^2\eta^{N-1})\partial_{x'}^{N+1}u+2\partial_n\eta^{N-1}\partial_n\partial_{x'}^{N+1}u.
\end{split}
\end{eqnarray}

As previous, we consider three cases stated in the following three lemmas. In the following proof, the constants $C,C_1...$ may vary from line to line, and can depend on $r$ but is independent of $A_0,A_1$. In addition, we use the constant $c$ to denote the quantities which are independent of $r$. \\

We first prove \eqref{ana1} holds for $\eta^{N-1}\partial_{x'}^{N+1}u$ case.
\begin{lemma}\label{leminduction1}Suppose the assumptions in Theorem \ref{mainthm2} are fulfilled. Suppose \eqref{ana1} holds for sufficiently large $A_0,A_1$. Then there holds
\begin{equation*}
\|\eta^{N-1}\partial_{x'}^{N+1}u\|_{\widetilde W^{k-i,2}} \le C_1\tilde A_0 A_0A_1^{N-4}(N-3-i)!,\quad i=0,1,2
\end{equation*}
for some constant $C_1>0$ and $\tilde A_0$ is the constant in Lemma \ref{lem1}.
\end{lemma}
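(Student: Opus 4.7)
The plan is to view $V:=\eta^{N-1}\partial_{x'}^{N+1}u$, which has compact support in $G_1$ once $r<1/2$, as the solution of a linear degenerate problem of the type \eqref{LM1}. From \eqref{MA-3}, $V$ satisfies
\begin{equation*}
\partial_n^2 V + x_n^m\Delta_{x'}V = F \quad\text{in }\mathbb R^n_+,\qquad V(x',0) = \eta^{N-1}(x',0)\partial_{x'}^{N+1}\varphi(x'),
\end{equation*}
where $F$ denotes the right-hand side of \eqref{MA-3}. Lemma \ref{lemest1} applied at level $k-i$ for $i=0,1,2$ (with $k$ fixed large enough that $k-2>n+2$) then gives
\begin{equation*}
\|V\|_{\widetilde W^{k-i,2}(G_1)} \le C\bigl(\|F\|_{H^{k-i}(G_1)} + \|\eta^{N-1}\partial_{x'}^{N+1}\varphi\|_{H^{k-i+1}(\mathbb R^{n-1})}\bigr).
\end{equation*}
Thus the task reduces to bounding each of the three contributions to $F$, plus the boundary datum, by a multiple of $A_0 A_1^{N-4}(N-3-i)!$.

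\textbf{Easy pieces.} The boundary term is controlled by the assumed analyticity of $\varphi$: a standard Cauchy estimate provides $R>0$ depending only on $\varphi$ and $k$ such that $\|\eta^{N-1}\partial_{x'}^{N+1}\varphi\|_{H^{k-i+1}(\mathbb R^{n-1})}\le CR^{N+1}(N+1+k-i)!$, which is absorbed in the target by choosing $A_1$ large relative to $R$. For the polynomial contribution $\eta^{N-1}\partial_{x'}^N G$, recall that $G$ is a polynomial in $(x,u,\nabla u,\nabla^2 u)$; Lemma \ref{lem1} applies with $y=(u,\nabla u,\nabla^2 u)$ and $p=N$, since the induction hypothesis \eqref{ana1}, rewritten with $M=l+2$, is exactly the required input $\|\eta^l\partial_x^l y_j\|_{\widetilde W^{k,2}}\le A_0 A_1^{(l-2)^+}(l-2)^+!$; its conclusion then delivers a bound of the desired form $\tilde A_0 A_1^{(N-2)^+}(N-2)^+!$. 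For the commutator $[\mathcal L,\eta^{N-1}]\partial_{x'}^{N+1}u$, each of the four schematic pieces factors as $C\,\partial^j\eta^{N-1}\cdot D\,\partial_{x'}^{N+1}u$ with $j\in\{1,2\}$ and $D$ of order at most one; the pointwise bound $|\partial^j\eta^{N-1}|\le CN^j\eta^{N-1-j}$ together with the induction hypothesis \eqref{ana1} (and interior analyticity of $u$ on the support of derivatives of the cut-off, where the equation is uniformly elliptic) absorbs the polynomial factor $N^j$ against the factorial slack between $(N-1)!$ and $(N-3-i)!$.

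\textbf{Main obstacle.} The genuine difficulty is the sum
\begin{equation*}
\sum_{i,j=1}^{n-1}\eta^{N-1}\partial_{x'}^N\bigl(a^{ij}x_n^m\partial_{ij}\partial_l u\bigr),
\end{equation*}
whose Leibniz expansion contains a top-order piece $\sum_{i,j}a^{ij}x_n^m\,\eta^{N-1}\partial_{ij}\partial_{x'}^{N+1}u$ of the same differential order as the principal part of the equation for $V$. Because $a^{ij}(0)=0$ and $a^{ij}$ is smooth, one has $|a^{ij}|\le Cr$ on $B_{2r}^+$; after commuting $\eta^{N-1}$ past $\partial_{ij}$ (the commutator being strictly lower-order and absorbed with the easy pieces), this top-order contribution enters as at most $Cr\|V\|_{\widetilde W^{k-i,2}(G_1)}$ and is absorbed back into the left-hand side by fixing $r$ small enough. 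The remaining summands in the Leibniz expansion all carry at least one $x'$-derivative on $a^{ij}$; since $a^{ij}$ is itself a polynomial in $(u_n, D_{x'}^2 u)$, Lemma \ref{lem1} supplies Cauchy-type bounds on $\eta^l\partial_x^l a^{ij}$ via the induction hypothesis, and Lemma \ref{lem2.3} combined with a Leibniz combinatorial count then majorizes each such summand by a multiple of $A_0 A_1^{N-4}(N-3-i)!$. Assembling the easy pieces with the hard piece, and absorbing the top-order $a^{ij}$-contribution by the smallness of $r$, yields the stated bound.
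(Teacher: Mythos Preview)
Your overall architecture for $i=0$ matches the paper's: view $V=\eta^{N-1}\partial_{x'}^{N+1}u$ as a solution of the model problem, apply Lemma~\ref{lemest1}, and bound the boundary datum and each of the three contributions to $F$; the identification of the top-order $a^{ij}$ piece as the one to be absorbed via smallness of $r$ is also correct. However, your treatment of the polynomial term $\eta^{N-1}\partial_{x'}^N G$ has a genuine gap.

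You invoke Lemma~\ref{lem1} with $p=N$ and $y=(u,\nabla u,\nabla^2 u)$. Since $y$ contains second derivatives of $u$, the hypothesis of that lemma at level $l$ asks for a bound on $\|\eta^l\partial_x^{l+2}u\|_{\widetilde W^{k,2}}$, i.e.\ \eqref{ana1} at order $l+2$; for $l=N-1,N$ this would require orders $N+1,N+2$, which are not in the induction hypothesis---the call is circular. Even waiving that, the output $\tilde A_0 A_1^{N-2}(N-2)!$ is \emph{not} ``of the desired form'': the target is $C_1\tilde A_0 A_0 A_1^{N-4}(N-3)!$, and the ratio $A_1^{2}(N-2)/A_0$ is unbounded in $N$, so the induction cannot close. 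The paper's remedy is to exploit the factorization $G=x_n^m\tilde G$, rewrite
\[
\eta^{N-1}\partial_{x'}^{N}G=\partial_{x'}^2\bigl(x_n^m\eta^{N-1}\partial_{x'}^{N-2}\tilde G\bigr)+(\text{commutators with }\eta),
\]
and then use that the $\widetilde W^{k,2}$ norm contains the summand $\sum_{|\alpha|=k+2,\alpha_n\le 1}\|x_n^m\partial_x^\alpha\,\cdot\,\|_{L^2}$, so that $\|x_n^m w\|_{H^{k+2}}\le C\|w\|_{\widetilde W^{k,2}}$. This lets one apply Lemma~\ref{lem1} with $p=N-2$, for which the induction hypothesis does suffice and which produces the correct $\tilde A_0 A_1^{N-4}(N-4)!$. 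The same $x_n^m$ device is what drives the estimates for $\eta^{N-1}\partial_{x'}^N(a^{ij}x_n^m\partial_{ij}\partial_l u)$ and for the tangential commutator $[x_n^m\Delta_{x'},\eta^{N-1}]\partial_{x'}^{N+1}u$; your parenthetical appeal to interior analyticity handles only the normal commutator $[\partial_n^2,\eta^{N-1}]$, since $\operatorname{supp}\partial_{x'}\eta$ meets $\{x_n=0\}$.

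A minor divergence: for $i=1,2$ the paper does not use the equation at all but writes $\eta^{N-1}\partial_{x'}^{N+1}u=\partial_{x'}(\eta^{N-1}\partial_{x'}^{N}u)-(N-1)\eta^{N-2}\partial_{x'}\eta\,\partial_{x'}^{N}u$ and reads the bound off \eqref{ana1} directly. Your route via Lemma~\ref{lemest1} at levels $k-1,k-2$ could work, but only once the $x_n^m$ trick above is in place for the right-hand side.
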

\begin{proof}
For $i=1$, we know
\begin{equation*}
\begin{split}
\|\eta^{N-1}\partial_{x'}^{N+1}u\|_{\widetilde W^{k-1,2}}=&\|\partial_{x'}(\eta^{N-1}\partial_{x'}^{N}u)-(N-1)\eta^{N-2}\partial_{x'}\eta \partial_{x'}^N u\|_{\widetilde W^{k-1,2}}\\
\le & C_1\|\eta^{N-2}\partial_{x'}^{N}u\|_{\widetilde W^{k,2}}+C_1(N-1)\|\eta^{N-2}\partial_{x'}^{N}u\|_{\widetilde W^{k-1,2}}\\
\le & C A_0 A_1^{N-4}(N-4)!.
\end{split}
\end{equation*}
In getting the last inequality, we used \eqref{ana1}.
By a similar argument, we can prove the case $i=2$.
\par It remains to prove the case $i=0$. Recall that $\eta^{N-1}\partial_{x'}^{N+1}u$ satisfies
\begin{eqnarray}\label{MA-4}
\begin{split}
&x_n^m\Delta_{x'}(\eta^{N-1}\partial_{x'}^{N+1}u)+\partial^2_{n}(\eta^{N-1}\partial_{x'}^{N+1}u)\\
=&\quad \eta^{N-1}\partial_{x'}^{N}G+\sum_{i,j=1}^{n-1}\eta^{N-1}\partial_{x'}^N(a^{ij}x_n^m\partial_{ij}\partial_{x'}u)+[\mathcal L,\eta^{N-1}]\partial_{x'}^{N+1}u\\
=&\quad \mathcal L(\eta^{N-1}\partial_{x'}^{N+1}u) \quad \text{in}\quad \mathbb R^n_+,\\
& \eta^{N-1}\partial_{x'}^{N+1} u(x',0)=\hat \eta^{N-1}\partial_{x'}^{N+1}\varphi(x'),\quad x'\in \mathbb R^{n-1}.
\end{split}
\end{eqnarray}
Here $\hat \eta(x')=\eta(x',0)$.
By Lemma \ref{lemest1}, we know
\begin{equation}
\|\eta^{N-1}\partial_{x'}^{N+1}u\|_{\widetilde W^{k,2}}\le C(\|\mathcal L(\eta^{N-1}\partial_{x'}^{N+1}u)\|_{H^{k}}+\|\hat \eta^{N-1}\partial_{x'}^{N+1}\varphi\|_{H^{k+1}}).
\end{equation}
By the analyticity of $\varphi$, we may assume
\begin{equation*}
\|\hat \eta^{N-1}\partial_{x'}^{N+1}\varphi\|_{H^{k+1}}\le A_0A_1^{N-4}(N-3)!.
\end{equation*}
In the following, we only need to estimate the terms in $\mathcal L(\eta^{N-1}\partial_{x'}^{N+1}u)$. By the definition of $G$, we may denote $G$ as $G=x_n^m \tilde G$ where $\tilde G$ is a polynomial of $\nabla u, \nabla^2 u$.

Firstly, we rewrite $\eta^{N-1}\partial_{x'}^{N}G$ as
\begin{equation*}
\begin{split}
\eta^{N-1}\partial_{x'}^{N}G=&\partial_{x'}^2(x_n^m\eta^{N-1}\partial_{x'}^{N-2}\tilde G)-2(N-1)\partial_{x'}((\partial_{x'} \eta)x_n^m\eta^{N-2}\partial_{x'}^{N-2}\tilde G)\\
&+(N-1)(N-2)(\partial_{x'} \eta)(\partial_{x'} \eta)x_n^m\eta^{N-3}\partial_{x'}^{N-2}\tilde G\\
&+(N-1)(\partial_{x'}^2 \eta)x_n^m\eta^{N-2}\partial_{x'}^{N-2}\tilde G.
\end{split}
\end{equation*}
Then we estimate the terms in $\eta^{N-1}\partial_{x'}^{N}G$ one by one.
\begin{eqnarray}\label{7}
\begin{split}
&\|\partial_{x'}^2(x_n^m\eta^{N-1}\partial_{x'}^{N-2}\tilde G)\|_{H^k}\le \|x_n^m\eta^{N-1}\partial_{x'}^{N-2}\tilde G\|_{H^{k+2}}\\
\le& C \|\eta^{N-2}\partial_{x'}^{N-2}\tilde G\|_{\widetilde W^{k,2}}\le C \tilde A_0 A_1^{N-4}(N-4)!.
\end{split}
\end{eqnarray}
In getting the above inequality, we used Lemma \ref{lem1} and induction assumption \eqref{ana1}.  And also
\begin{equation}
\begin{split}
&\|\partial_{x'}((\partial_{x'} \eta)x_n^m\eta^{N-2}\partial_{x'}^{N-2}\tilde G)\|_{H^k}\le \|(\partial_{x'} \eta)x_n^m\eta^{N-2}\partial_{x'}^{N-2}\tilde G\|_{H^{k+1}}\\
\le& C \|\eta^{N-2}\partial_{x'}^{N-2}\tilde G\|_{\widetilde W^{k-1,2}}\le C \tilde A_0A_1^{N-4}(N-5)!.
\end{split}
\end{equation}
Similar arguments yield that
\begin{eqnarray}\label{8}
\|(\partial_{x'} \eta)(\partial_{x'} \eta)x_n^m\eta^{N-3}\partial_{x'}^{N-2}\tilde G\|_{H^k}\le C \tilde A_0 A_1^{N-4}(N-6)!
\end{eqnarray}
if we notice that $\frac{\eta_j\eta_l}{\eta}\in C_c^\infty(\overline{\mathbb R^n_+})$, $j,l=1,2,\cdots,n-1$.  Combining the estimates \eqref{7}-\eqref{8}, one gets
\begin{equation}
\|\eta^{N-1}\partial_{x'}^{N}G\|_{H^k}\le C\tilde A_0A_1^{N-4}(N-4)!.
\end{equation}

In the following, we estimate $\eta^{N-1}\partial_{x'}^N(a^{ij}x_n^m\partial_{ij}\partial_{x'}u)$. As previous, we rewrite this term as
\begin{equation}
\begin{split}
&\eta^{N-1}\partial_{x'}^N(a^{ij}x_n^m\partial_{ij}\partial_{x'}u)\\
=&\partial_{x'}^2(x_n^m\eta^{N-1}\partial_{x'}^{N-2}(a^{ij}\partial_{ij}\partial_{x'}u))-2(N-1)\partial_{x'}((\partial_{x'} \eta)x_n^m\eta^{N-2}\partial_{x'}^{N-2}(a^{ij}\partial_{ij}\partial_{x'}u))\\
&+(N-1)(N-2)(\partial_{x'} \eta)(\partial_{x'} \eta)x_n^m\eta^{N-3}\partial_{x'}^{N-2}(a^{ij}\partial_{ij}\partial_{x'}u)\\
&+(N-1)(\partial_{x'}^2 \eta)x_n^m\eta^{N-2}\partial_{x'}^{N-2}(a^{ij}\partial_{ij}\partial_{x'}u).
\end{split}
\end{equation}
Then one knows
\begin{eqnarray}\label{est3}
\begin{split}
&\|\partial_{x'}^2(x_n^m\eta^{N-1}\partial_{x'}^{N-2}(a^{ij}\partial_{ij}\partial_{x'}u))\|_{H^k}\le \|\eta^{N-1}\partial_{x'}^{N-2}(a^{ij}\partial_{ij}\partial_{x'}u)\|_{\widetilde W^{k,2}}\\
\le & cr\|\eta^{N-1}\partial_{x'}^{N+1} u\|_{{\widetilde W^{k,2}}}+\sum_{l= 1}^{N-2}\frac{(N-2)!}{l!(N-2-l)!}\|\eta^{N-1}\partial_{x'}^la^{ij}\partial_{x'}^{N-2-l}\partial_{ij}\partial_{x'}u\|_{\widetilde W^{k,2}}\\
\le &cr\|\eta^{N-1}\partial_{x'}^{N+1} u\|_{{\widetilde W^{k,2}}}+C \tilde A_0 A_0 A_1^{N-4}(N-3)!.
\end{split}
\end{eqnarray}
The constant $c$ in the above inequality is independent of $r$.
In getting the last inequality of \eqref{est3}, we need to use the induction assumption \eqref{ana1} and  do calculations as in Lemma \ref{lem1}.
Since only the order of derivative in  $\eta^{N-1}\partial_{x'}^la^{ij}\partial_{x'}^{N-2-l}\partial_{ij}\partial_{x'}u$ matters to the estimate, we make a convention that all the terms $\eta^{N-1}\partial_{x'}^l a^{ij}\partial_{x'}^{N-2-l}\partial_{ij}\partial_{x'}u$ are the same for a fixed $l$.

Similarly, we estimate the other terms in $\eta^{N-1}\partial_{x'}^N(a^{ij}x_n^m\partial_{ij}\partial_{x'}u)$ and then get
\begin{equation}
\|\eta^{N-1}\partial_{x'}^N(a^{ij}x_n^m\partial_{ij}\partial_{x'}u)\|_{H^k}\le cr\|\eta^{N-1}\partial_{x'}^{N+1} u\|_{{\widetilde W^{k,2}}}+C \tilde A_0 A_0 A_1^{N-4}(N-3)!.
\end{equation}
\par For the last term $[\mathcal L,\eta^{N-1}]\partial_{x'}^{N+1}u$, the part $[x_n^m\Delta_{x'},\eta^{N-1}]\partial_{x'}^{N+1}u$ always contains the factor $x_n^m$, we can deduce the $H^k$-norm for $[x_n^m\Delta_{x'},\eta^{N-1}]\partial_{x'}^{N+1}u$ exactly the same as previous two terms in $\mathcal L(\eta^{N-1}\partial_{x'}^{N+1}u)$.

In the following, we estimate the remaining term $[\partial_n^2,\eta^{N-1}]\partial_{x'}^{N+1}u$.
\begin{eqnarray}\label{anainterior2}
\begin{split}
&[\partial_n^2,\eta^{N-1}]\partial_{x'}^{N+1}u=2(N-1)\eta_n\eta^{N-2}\partial_n\partial_{x'}^{N+1}u\\
&\quad\quad +(N-1)(N-2)\eta_n^2 \eta^{N-3}\partial_{x'}^{N+1}u+(N-1)\eta_{nn}\eta^{N-2}\partial_{x'}^{N+1}u.
\end{split}
\end{eqnarray}
By the definition of $\eta$, we know
\begin{equation*}
\text{supp }\eta_n\subset [-2r,2r]^{n-1}\times[r,2r](:=Q_r).
\end{equation*}

From the discussion at the beginning of this section, we know $u$ is analytic in $\{x_n>0\}$.  Hence, in the following, we can always assume
\begin{eqnarray}\label{anainterior1}
\|\eta^{\tilde N-2}\partial_x^{\tilde N+2} u\|_{H^k(Q_r)}\le A_0 A_1^{(\tilde N-4)^+}(\tilde N-4)^+!,\quad \forall \tilde N=0,1,2,\cdots.
\end{eqnarray}
Then from \eqref{anainterior2} and \eqref{anainterior1}, one obtains
\begin{equation}
\|[\partial_n^2,\eta^{N-1}]\partial_{x'}^{N+1}u\|_{H^k}\le C A_0 A_1^{N-4}(N-3)!.
\end{equation}

Combining all the above estimates, one gets
\begin{equation*}
\|\eta^{N-1}\partial_{x'}^{N+1}u\|_{\widetilde W^{k,2}}\le cr\|\eta^{N-1}\partial_{x'}^{N+1}u\|_{\widetilde W^{k,2}}+C \tilde A_0 A_0 A_1^{N-4}(N-3)!.
\end{equation*}
By taking $r$ small enough, one proves the present lemma.
\end{proof}

\begin{lemma}\label{leminduction2}
Suppose the assumptions in Theorem \ref{mainthm2} are fulfilled. Suppose \eqref{ana1} holds for sufficiently large $A_0,A_1$. Then there holds
\begin{equation*}
\|\eta^{N-1}\partial_n\partial_{x'}^{N}u\|_{\widetilde W^{k-i,2}} \le C_1\tilde A_0 A_0 A_1^{N-4}(N-3-i)!,\quad i=0,1,2.
\end{equation*}
\end{lemma}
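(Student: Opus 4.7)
The argument parallels Lemma \ref{leminduction1}; the new feature is that the derivative $\partial_n$ hits the degenerate coefficient $x_n^m$. Applying $\partial_n\partial_{x'}^{N-1}$ to \eqref{302} I would obtain
\begin{equation*}
\mathcal L(\partial_n\partial_{x'}^N u) = \partial_n\partial_{x'}^{N-1}G + \sum_{i,j=1}^{n-1}\partial_n\partial_{x'}^{N-1}(a^{ij}x_n^m\partial_{ij}\partial_l u) - m x_n^{m-1}\Delta_{x'}\partial_{x'}^N u,
\end{equation*}
and multiplying by $\eta^{N-1}$ and introducing the commutator $[\mathcal L,\eta^{N-1}]\partial_n\partial_{x'}^N u$ exactly as in \eqref{MA-3} produces a linear degenerate equation for $\eta^{N-1}\partial_n\partial_{x'}^N u$, to which I would apply Lemma \ref{lemest1} with parameter $k$.

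The polynomial terms $\eta^{N-1}\partial_n\partial_{x'}^{N-1}G$ and $\eta^{N-1}\partial_n\partial_{x'}^{N-1}(a^{ij}x_n^m\partial_{ij}\partial_l u)$ are handled exactly as in Lemma \ref{leminduction1}: I pull $\partial_{x'}^2$ (together with the $x_n^m$ factor) outside so that the corresponding quantities become admissible elements of $\widetilde W^{k,2}$, then combine the Leibniz rule with Lemma \ref{lem1}, Lemma \ref{lem2.3} and the induction hypothesis \eqref{ana1}; the smallness $|a^{ij}(x)|\le c|x|$ again contributes a $cr\|\eta^{N-1}\partial_n\partial_{x'}^N u\|_{\widetilde W^{k,2}}$ term absorbable for $r$ small. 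For the commutator, the $[x_n^m\Delta_{x'},\eta^{N-1}]$ piece carries the weight $x_n^m$ and is treated identically, while $[\partial_n^2,\eta^{N-1}]\partial_n\partial_{x'}^N u$ is supported in $\{x_n\ge r\}$ because $\partial_n\eta$ vanishes near $x_n=0$, where the interior analyticity bound \eqref{anainterior1} applies. The new term $-m x_n^{m-1}\Delta_{x'}\partial_{x'}^N u$ has exactly the shape $x_n^{m-1}\partial_x^\beta u$ with $|\beta|=k+1$, $\beta_n=0$, i.e.\ the weighted piece already sitting inside $\|\eta^{N-1}\partial_{x'}^{N+1}u\|_{\widetilde W^{k,2}}$, and so it is controlled by Lemma \ref{leminduction1}.

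The main obstacle is the boundary datum $\eta^{N-1}(x',0)\partial_n\partial_{x'}^N u(x',0)=\eta^{N-1}(x',0)\partial_{x'}^N u_n(x',0)$, which is not given directly by $\varphi$. Using $\partial_n\eta(x',0)=0$ and the Leibniz rule, the top-order contribution to its $H^{k+1}(\mathbb R^{n-1})$-norm reduces to $\eta^{N-1}(x',0)\partial_n\partial_{x'}^{N+k+1}u(x',0)$ in $L^2(\mathbb R^{n-1})$, which is precisely the content of the trace $\|\partial_n(\eta^{N-1}\partial_{x'}^{N+1}u)(x',0)\|_{H^k}$ built into $\|\eta^{N-1}\partial_{x'}^{N+1}u\|_{\widetilde W^{k,2}}$, already bounded by Lemma \ref{leminduction1}; the remaining lower-order Leibniz contributions involve $\partial_n\partial_{x'}^? u(x',0)$ for $?\le N+k$ and are supplied by the induction hypothesis \eqref{ana1} at step $N$ through the trace $\|\partial_n\partial_x^\alpha u(x',0)\|_{H^{k-i}}$ embedded in $\widetilde W^{k-i,2}$. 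Finally, the cases $i=1,2$ follow from the $i=0$ case via the identity $\partial_{x'}(\eta^{N-1}\partial_n\partial_{x'}^{N-1}u)=\eta^{N-1}\partial_n\partial_{x'}^N u+(N-1)\eta^{N-2}\partial_{x'}\eta\cdot\partial_n\partial_{x'}^{N-1}u$ together with \eqref{ana1}, and choosing $r$ small enough absorbs the $cr$-term and closes the estimate with the stated factorial constants.
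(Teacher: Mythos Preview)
Your proposal is correct and follows essentially the same route as the paper: derive the equation for $\eta^{N-1}\partial_n\partial_{x'}^{N}u$ with the extra commutator term $-mx_n^{m-1}\eta^{N-1}\Delta_{x'}\partial_{x'}^N u$, control the boundary datum via the trace piece $\|\partial_n(\eta^{N-1}\partial_{x'}^{N+1}u)(x',0)\|_{H^k}$ contained in $\|\eta^{N-1}\partial_{x'}^{N+1}u\|_{\widetilde W^{k,2}}$ (using $\partial_n\eta(x',0)=0$), treat the $G$, $a^{ij}$ and $[\mathcal L,\eta^{N-1}]$ terms as in Lemma~\ref{leminduction1}, and bound the new $x_n^{m-1}$ term by the weighted piece of $\|\eta^{N-1}\partial_{x'}^{N+1}u\|_{\widetilde W^{k,2}}$ already established there. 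One minor remark: the cases $i=1,2$ do not actually require the $i=0$ case at step $N+1$; as in the paper's Lemma~\ref{leminduction1} they follow directly from the Leibniz identity and the induction hypothesis \eqref{ana1} at step $N$, which your wording ``together with \eqref{ana1}'' correctly identifies.
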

\begin{proof}
As the proof in Lemma \ref{leminduction1}, we only need to show it for $i=0$.
Recall that $\eta^{N-1}\partial_n\partial_{x'}^{N}u$ satisfies
\begin{eqnarray}\label{MA-5}
\begin{split}
&x_n^m\Delta_{x'}(\eta^{N-1}\partial_n\partial_{x'}^{N}u)+\partial^2_{n}(\eta^{N-1}\partial_n\partial_{x'}^{N}u)\\
=&\quad \eta^{N-1}\partial_n\partial_{x'}^{N-1}G+\sum_{i,j=1}^{n-1}\eta^{N-1}\partial_n\partial_{x'}^{N-1}(a^{ij}x_n^m\partial_{ij}\partial_{x'}u)\\
& \quad \quad +[\mathcal L,\eta^{N-1}]\partial_n\partial_{x'}^{N}u-mx_n^{m-1}\eta^{N-1}\Delta_{x'}\partial_{x'}^N u \quad \text{in}\quad \mathbb R^n_+.
\end{split}
\end{eqnarray}
We first consider the boundary term $\eta^{N-1}\partial_n\partial_{x'}^{N}u(x',0)$.
\begin{eqnarray}\label{9}
\begin{split}
&\|\eta^{N-1}\partial_n\partial_{x'}^{N}u(x',0)\|_{H^{k+1}}\\
= & \|\partial_n\partial_{x'}(\eta^{N-1}\partial_{x'}^{N}u(x',0))\|_{H^{k}}\\ \le  &\|\partial_n(\eta^{N-1}\partial_{x'}^{N+1}u(x',0))\|_{H^k}+(N-1)\|\eta_{x'}\partial_n(\eta^{N-2}\partial_{x'}^{N}u(x',0))\|_{H^k}\\
\le & C \tilde A_0 A_0 A_1^{N-4}(N-3)!.
\end{split}
\end{eqnarray}
In the calculation of \eqref{9}, the property of $\eta(x)=\chi(x_1)\cdots\chi(x_n)$ is used in the commutation of $\partial_n$ and $\eta^{N-1}$ on $x_n=0$.
In getting the last inequality of \eqref{9}, we used Lemma \ref{leminduction1}.
\par In the following,  we estimate the $H^k$-norm for $\eta^{N-1}\partial_n\partial_{x'}^{N-1}G, \eta^{N-1}\partial_n\partial_{x'}^{N-1}(a^{ij}x_n^m\partial_{ij}\partial_{x'}u)$, $ [\mathcal L,\eta^{N-1}]\partial_n\partial_{x'}^{N}u, mx_n^{m-1}\eta^{N-1}\Delta_{x'}\partial_{x'}^N u $ due to Lemma \ref{lemest1} and \eqref{MA-5}.

As in the proof of Lemma \ref{leminduction1}, we rewrite $\eta^{N-1}\partial_n \partial_{x'}^{N-1} G$ as
\begin{eqnarray}\label{bb1}
\begin{split}
\eta^{N-1}\partial_n \partial_{x'}^{N-1} G=&\eta^{N-1}\partial_n(x_n^m \partial_{x'}^{N-1}\tilde G)\\=&\partial_{nx'}(\eta^{N-1}x_n^m \partial_{x'}^{N-2}\tilde G)-(N-1)\partial_n(\eta_{x'}\eta^{N-2}x_n^m\partial_{x'}^{N-2}\tilde G)\\&-(N-1)\partial_{x'}(\eta_n\eta^{N-2}x_n^m\partial_{x'}^{N-2}\tilde G)\\
&+(N-1)(N-2)\eta_n\eta_{x'}\eta^{N-3}x_n^m\partial_{x'}^{N-2}\tilde G+(N-1)\eta_{nx'}\eta^{N-2}x_n^m\partial_{x'}^{N-2}\tilde G.
\end{split}
\end{eqnarray}
Then all the terms on the right hand-side of \eqref{bb1}  can be estimated as in the proof of Lemma \ref{leminduction1} due to the presence of $x_n^m$.  Thus we have
\begin{equation*}
\|\eta^{N-1}\partial_n(x_n^m \partial_{x'}^{N-1}\tilde G)\|_{H^k}\le C\tilde A_0 A_1^{N-4}(N-4)!.
\end{equation*}

Similarly, $\eta^{N-1}\partial_n\partial_{x'}^{N-1}(a^{ij}x_n^m\partial_{ij}u_{x'})$, $ [\mathcal L,\eta^{N-1}]\partial_n\partial_{x'}^{N}u$ can be estimated exactly as in Lemma \ref{leminduction1}. We only need to take care of $x_n^{m-1}\eta^{N-1}\Delta_{x'}\partial_{x'}^N u$.
\begin{eqnarray}\label{bb2}
\begin{split}
&x_n^{m-1}\eta^{N-1}\Delta_{x'}\partial_{x'}^N u
\\=& \partial_{x'}(x_n^{m-1}\eta^{N-1}\partial_{x'}^{N-1}\Delta_{x'}u)-(N-1)x_n^{m-1}\partial_{x'}(\eta_{x'}\eta^{N-2}\partial_{x'}^{N-2}\Delta_{x'}u)\\
+&(N-1)(N-2)x_{n}^{m-1}\eta_{x'}\eta_{x'}\eta^{N-3}\partial_{x'}^{N-2}\Delta_{x'} u+(N-1)\eta_{x'x'}\eta^{N-2}x_n^{m-1}\partial_{x'}^{N-2}\Delta_{x'}u.
\end{split}
\end{eqnarray}
For the first term $\partial_{x'}(x_n^{m-1}\eta^{N-1}\partial_{x'}^{N-1}\Delta_{x'}u)$, we have
\begin{equation}
\begin{split}
&\|\partial_{x'}(x_n^{m-1}\eta^{N-1}\partial_{x'}^{N-1}\Delta_{x'}u)\|_{H^k}\le \|x_n^{m-1}\eta^{N-1}\partial_{x'}^{N-1}\Delta_{x'}u\|_{H^{k+1}}\\
\le &C_1\|\eta^{N-1}\partial_{x'}^{N+1} u\|_{\widetilde W^{k,2}}\le C_2\tilde A_0 A_0 A_1^{N-4}(N-3)!.
\end{split}
\end{equation}
In getting the above inequality, we used Lemma \ref{leminduction1} and the definition of $\widetilde W^{k,2}$. All the remaining terms in \eqref{bb2} can be estimated in a similar way.

Combining all the above estimates together yield the present lemma.
\end{proof}

\begin{lemma}\label{leminduction3}Suppose the assumptions in Theorem \ref{mainthm2} are fulfilled. Suppose \eqref{ana1} holds for sufficiently large $A_0,A_1$.
For $2 \le l\le N+1$, there holds
\begin{equation*}
\|\eta^{N-1}\partial^l_n\partial_{x'}^{N+1-l}u\|_{\widetilde W^{k-i,2}} \le C_1\tilde A_0 A_0 A_1^{N-4}(N-3-i)!,\quad i=0,1,2.
\end{equation*}
\end{lemma}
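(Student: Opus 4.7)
The plan is to prove the lemma by inner induction on $l \geq 2$, with the base cases $l = 0, 1$ furnished by Lemmas \ref{leminduction1} and \ref{leminduction2}. As in those two lemmas, the $i = 1, 2$ estimates reduce to the $i = 0$ estimate by commuting a tangential derivative through the cutoff $\eta^{N-1}$ and invoking the already-established bounds at a smaller value of $N$ or $l$, so the real work is in establishing the $i = 0$ case.

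The central idea is to use equation \eqref{MA-1} algebraically. Since
$$u_{nn} = -x_n^m(-u_n)^{n+2}\det D^2_{x'}u,$$
for every $l \geq 2$ we have the identity
$$\partial_n^l\partial_{x'}^{N+1-l}u = -\partial_n^{l-2}\partial_{x'}^{N+1-l}\bigl[x_n^m(-u_n)^{n+2}\det D^2_{x'}u\bigr].$$
I would multiply by $\eta^{N-1}$ and expand by Leibniz's rule, organizing the sum by how many of the $l-2$ normal derivatives (with $0 \leq r \leq \min(m, l-2)$) fall on $x_n^m$; the surviving weight is $x_n^{m-r}$, and the remaining $l-2-r$ normal derivatives are distributed over factors of the polynomial $(-u_n)^{n+2}\det D^2_{x'}u$. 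Each base factor ($u_n$ or $u_{ij}$ with $1 \leq i, j \leq n-1$) carries at most one normal derivative, so every resulting factor $\partial_n^{\alpha}\partial_{x'}^{\beta}u$ satisfies $\alpha \leq l-1$. This strict drop in the normal-derivative count is precisely what closes the induction on $l$: each such factor is bounded either by Lemmas \ref{leminduction1}--\ref{leminduction2} or by Lemma \ref{leminduction3} applied with $l' < l$ at the same step $N$. The factor-wise bounds are then combined via Lemma \ref{lem2.3} for products and Lemma \ref{lem1} with $\Phi(s, T) = (-s)^{n+2}\det(T_{ij})$ for the polynomial bookkeeping, the combinatorial sum \eqref{cl1} absorbing all multinomial coefficients and delivering the target factorial $(N-3)!$.

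The main obstacle will be the critical case $r = m$, in which the $x_n^m$-weight has been entirely consumed by the Leibniz rule and no weight remains to feed the weighted norms in $\widetilde W^{k,2}(G_1)$. Here I would split the support into a near-boundary region and the interior piece $Q_r = [-2r, 2r]^{n-1} \times [r, 2r]$ where $\eta_n$ lives: on $Q_r$ the interior analyticity bound \eqref{anainterior1} provides the required factorial estimate with a constant independent of $r$, while near $\{x_n = 0\}$ one uses the remaining weight structure of $\widetilde W^{k,2}$ and, if necessary, absorbs a small $cr\|\eta^{N-1}\partial_n^l\partial_{x'}^{N+1-l}u\|_{\widetilde W^{k,2}}$ tail back to the left-hand side by shrinking $r$, exactly as in Lemma \ref{leminduction1}. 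Collecting all contributions then yields the claimed bound $C_1\tilde A_0 A_0 A_1^{N-4}(N-3-i)!$, which closes the induction on $l$ and hence on $N$.
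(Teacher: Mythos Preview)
Your approach is genuinely different from the paper's, and as written it has a gap that prevents the induction from closing.

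The paper does \emph{not} argue by direct algebraic substitution of $u_{nn}=-x_n^m(-u_n)^{n+2}\det D^2_{x'}u$ into the full $\widetilde W^{k,2}$ norm. Instead it applies the linear elliptic estimate of Lemma~\ref{lemest1} to $w=\eta^{N-1}\partial_n^{l}\partial_{x'}^{N+1-l}u$: one computes $\mathcal L w$, obtains the equation \eqref{MA-6}, and bounds $\|\mathcal L w\|_{H^k}$ together with the boundary trace $\|w(\cdot,0)\|_{H^{k+1}}$. The crucial point is that every term appearing on the right of \eqref{MA-6} either carries a weight $x_n^{m-l'}$ with $l'\ge 1$ (hence has total order $\le N$ on $u$ after pulling two derivatives into the $\widetilde W^{k,2}$ structure and applying Lemma~\ref{lem1}), or is supported in the interior strip $Q_r$ where \eqref{anainterior1} applies. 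Thus the paper bounds \emph{all} $l\in[2,N+1]$ in one stroke using only the outer induction hypothesis \eqref{ana1} at step $N$ and Lemmas~\ref{leminduction1}--\ref{leminduction2}; there is no recursion on $l$. The algebraic substitution you describe is used in the paper only to evaluate the boundary trace $w(x',0)$ when $l\ge m+2$.

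Your scheme, by contrast, expresses $\eta^{N-1}\partial_n^{l}\partial_{x'}^{N+1-l}u$ directly through the equation and then appeals to an inner induction on $l$. The difficulty is the top-order contribution: when all $l-2$ normal and $N+1-l$ tangential derivatives land on a single $u_{ij}$ factor of $\Psi=(-u_n)^{n+2}\det D^2_{x'}u$, one obtains a term of the form $x_n^m(\text{bounded})\,\partial_n^{l-2}\partial_{x'}^{N+3-l}u$, which has the \emph{same} total order $N+1$ and is controlled only by the inner hypothesis at $l-2$. This produces a recursion $B_l\le C_0\,B_{l-2}+(\text{lower order})$ with a fixed constant $C_0>1$ coming from the product estimate in Lemma~\ref{lem2.3} and the number of factors in $\Psi$; the weight $x_n^m$ does not make $C_0$ small because the $\widetilde W^{k,2}$ norm contains unweighted pieces (e.g.\ $\|\partial_n^2\partial_x^\alpha(\cdot)\|_{L^2}$) that survive after Leibniz kills the weight. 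Iterating up to $l\sim N$ yields a factor $C_0^{N/2}$ in front of $A_1^{N-4}(N-3)!$, and this $N$-dependent prefactor cannot be absorbed into a fixed choice of $A_1$ in the step $N\to N+1$ of \eqref{ana1}. Lemma~\ref{lem1} does not rescue this, since its hypothesis at order $p=N-1$ on $\Psi$ would already require the very $(N+1)$-st order bounds you are proving.

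A secondary issue: your treatment of the Leibniz index $r=m$ conflates two unrelated mechanisms. That term has no weight but has total order $\le N+1-m\le N$ on $u$, so it is handled by the \emph{outer} induction on $N$ and a factorial comparison; the splitting into $Q_r$ and the absorption of a small $cr$-tail belong to the commutator $[\mathcal L,\eta^{N-1}]$ and the $a^{ij}$-term in the paper's PDE approach, neither of which appears in a purely algebraic substitution.

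In short, the missing ingredient is the a priori estimate of Lemma~\ref{lemest1}: it converts the $\widetilde W^{k,2}$ control into an $H^k$ bound on $\mathcal L w$ plus a trace, and it is precisely this that lets the paper avoid any $l$-recursion and keep the constants uniform in $N$.
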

\begin{proof}
As previous, we only need to show the case for $i=0$.
Recall that $\eta^{N-1}\partial^l_n\partial_{x'}^{N+1-l}u$ satisfies
\begin{eqnarray}\label{MA-6}
\begin{split}
&x_n^m\Delta_{x'}(\eta^{N-1}\partial^l_n\partial_{x'}^{N+1-l}u)+\partial^2_{n}(\eta^{N-1}\partial^l_n\partial_{x'}^{N+1-l}u)\\
=&\quad \eta^{N-1}\partial^{l-1}_n\partial_{x'}^{N+1-l}G+\sum_{i,j=1}^{n-1}\eta^{N-1}\partial^{l-1}_n\partial_{x'}^{N-1+l}(a^{ij}x_n^m\partial_{ij}\partial_n u)+[\mathcal L,\eta^{N-1}]\partial^l_n\partial_{x'}^{N+1-l}u\\
&\quad -\eta^{N-1}\sum_{l'=1}^{\min(l-1,m)}\frac{(l-1)!}{l'!(l-1-l')!}\partial_n^{l'}(x_n^m)\partial_{n}^{l-l'-1}\partial_{x'}^{N+1-l}(\Delta_{x'}\partial_n u) \quad \text{in}\quad \mathbb R^n_+
\end{split}
\end{eqnarray}
where
\begin{eqnarray*}
[\mathcal L,\eta^{N-1}]\partial^l_n\partial_{x'}^{N+1-l}u= [x_n^m\Delta_{x'},\eta^{N-1}]\partial^l_n\partial_{x'}^{N+1-l}u+[\partial_n^2,\eta^{N-1}]\partial^l_n\partial_{x'}^{N+1-l}u.
\end{eqnarray*}
For the boundary term $\eta^{N-1}\partial^l_n\partial_{x'}^{N+1-l}u(x',0)$, we distinguish with two cases:
\begin{itemize}
\item[(1)] $2\le l\le m+1$. Then by the equation \eqref{MA-1}, we know $\eta^{N-1}\partial^l_n\partial_{x'}^{N+1-l}u(x',0)\equiv 0$.
\item[(2)] $m+2\le l\le N+1$. Differentiating \eqref{MA-1} with respect to $x_n$ for $l-2$ times and then with respect to $x'$ for $N+1-l$ times, one gets
 \begin{equation}
\begin{split}
&\eta^{N-1}\partial^l_n\partial_{x'}^{N+1-l}u(x',0)\\
=&-\eta^{N-1}\partial_{n}^{l-2}\partial_{x'}^{N+1-l}(x_n^m(-u_n)^{n+2}\det D^2_{x'}u)\bigg |_{x_n=0}\\=&-\eta^{N-1}\frac{(l-2)!}{(l-2-m)!}\partial^{l-2-m}_n\partial_{x'}^{N+1-l}(x_n^m(-u_n)^{n+2}\det D^2_{x'}u)(x',0).
\end{split}
\end{equation}
\end{itemize}
Then following the same calculations as in Lemma \ref{lem1} and using the induction assumptions, one obtains
\begin{equation}
\|\eta^{N-1}\partial^l_n\partial_{x'}^{N+1-l}u(x',0)\|_{H^{k+1}}\le C\tilde A_0 A_1^{N-4}(N-4)!.
\end{equation}

For \eqref{MA-6}, we can deduce the $H^k$-norm for $\eta^{N-1}\partial^{l-1}_n\partial_{x'}^{N+1-l}G$, $[\mathcal L,\eta^{N-1}]\partial^l_n\partial_{x'}^{N+1-l}u$ and $\eta^{N-1}\partial^{l-1}_n\partial_{x'}^{N-1+l}(a^{ij}x_n^m\partial_{ij}\partial_n u)$ exactly the same as previous lemmas.

Afterwards, we only need to take care of the term $\eta^{N-1}\partial_n^{l'}(x_n^m)\partial_{n}^{l-l'-1}\partial_{x'}^{N+1-l}(\Delta_{x'}\partial_n u)$.  This term can be discussed in the following different cases:
\begin{itemize}
\item[(1)] $l=2, l'=1$.  We have
\begin{equation*}
\eta^{N-1}\partial_n^{l'}(x_n^m)\partial_{n}^{l-l'-1}\partial_{x'}^{N+1-l}(\Delta_{x'}\partial_n u)=mx_n^{m-1}\eta^{N-1}\partial_{x'}^{N-1}\Delta_{x'}\partial_n u.
\end{equation*}
Then similar arguments as \eqref{bb2} and by the results of Lemma \ref{leminduction2}, one knows
\begin{equation*}
\|\eta^{N-1}x_n^{m-1}\partial_{x'}^{N-1}\Delta_{x'}\partial_n u\|_{H^k}\le C\tilde A_0 A_0 A_1^{N-4}(N-3)!.
\end{equation*}
\item[(2)] $l\ge 3$, $1\le l'\le\min( l-1,m)$. We can rewrite $\eta^{N-1}\partial_{n}^{l-l'-1}\partial_{x'}^{N+1-l}(\Delta_{x'}\partial_n u)$ as
\begin{equation*}
\begin{split}
&\eta^{N-1}\partial_{n}^{l-l'-1}\partial_{x'}^{N+1-l}(\Delta_{x'}\partial_n u)=\partial_{n}^2(\eta^{N-1}\partial_n^{l-l'-2}\partial_{x'}^{N+1-l}\Delta_{x'}u)\\
-&2(N-1)\partial_n(\eta_n\eta^{N-2}\partial_n^{l-l'-2}\partial_{x'}^{N+1-l}\Delta_{x'}u)+(N-1)\eta_{nn}\eta^{N-2}\partial_n^{l-l'-2}\partial_{x'}^{N+1-l}\Delta_{x'}u
\\+&(N-1)(N-2)\eta_n^2 \eta^{N-3}\partial_n^{l-l'-2}\partial_{x'}^{N+1-l}\Delta_{x'}u.
\end{split}
\end{equation*}
By the induction assumption \eqref{ana1}, we know
\begin{equation*}
\begin{split}
&\|\partial_{n}^2(\eta^{N-1}\partial_n^{l-l'-2}\partial_{x'}^{N+1-l}\Delta_{x'}u)\|_{H^k}\\
\le &\|\eta^{N-1}\partial_n^{l-l'-2}\partial_{x'}^{N+1-l}\Delta_{x'}u)\|_{\widetilde W^{k,2}}\\
\le &A_0 A_1^{(N-3-l')^+}(N-3-l')^+!.
\end{split}
\end{equation*}
Similar estimates also hold for the remaining terms of $\eta^{N-1}\partial_{n}^{l-l'-1}\partial_{x'}^{N+1-l}(\Delta_{x'}\partial_nu)$ by noticing that these terms contain $\eta_n$.

\quad \quad  Then
\begin{equation*}
\|\eta^{N-1}\partial_{n}^{l-l'-1}\partial_{x'}^{N+1-l}(\Delta_{x'}\partial_n u)\|_{H^k}\leq C A_0 A_1^{(N-3-l')^+}(N-3-l')^+!.
\end{equation*}

This implies
\begin{equation}
\begin{split}
&\sum_{l'=1}^{\min(l-1,m)}\frac{(l-1)!}{l'!(l-1-l')!}\|\eta^{N-1}\partial_n^{l'}(x_n^m)\partial_{n}^{l-l'-1}\partial_{x'}^{N+1-l}(\Delta_{x'}\partial_{n})\|_{H^k}\\
\le & C A_0 A_1^{N-4}(N-3)!.
\end{split}
\end{equation}
\end{itemize}
Thus, applying Lemma \ref{lemest1}, one proves the present lemma.
\end{proof}
\begin{remark}
Combining the estimates of Lemma \ref{leminduction1}-\ref{leminduction3} and choosing $A_1\ge C_1\tilde A_0 $ large enough, we can get the estimate \eqref{ana1} for $N+1$.
\end{remark}

\section{Acknowledgments}
The work of the first author is sponsored by Shanghai Rising-Star Program 19QA1400900.

\bibliographystyle{abbrv}
\bibliography{1_mainbib-hgg}

\end{document}